\let\epsilon\varepsilon
\let\phi\varphi
\let\theta\vartheta
\newtheorem{mytheorem}{Theorem}[section]
\newtheorem{myprop}[mytheorem]{Proposition}
\newtheorem{mycor}[mytheorem]{Corollary} 
\theoremstyle{definition}
\newtheorem{mydef}[mytheorem]{Definition}
\newtheorem{myre}[mytheorem]{Remark}
\newtheorem{mylemma}[mytheorem]{Lemma}
\newcommand{\R}{\mathbb{R}}
\newcommand{\N}{\mathbb{N}}
\newcommand{\cQ}{\mathcal{Q}}
\newcommand{\cD}{\mathcal{D}}
\newcommand{\cC}{\mathcal{C}}
\newcommand{\cR}{\mathcal{R}}
\newcommand{\cS}{\mathcal{S}}
\newcommand{\cM}{\mathcal{M}}
\newcommand{\cW}{\mathcal{W}}
\newcommand{\va}{\vec{a}}
\newcommand{\vp}{\vec{p}}
\newcommand{\cn}{{n}}
\newcommand{\ca}{{a}}
\newcommand{\Span}{\operatorname{span}}
 \newcommand{\bN}{\mathbb{N}}
 \newcommand{\cP}{\mathcal{P}}
\newcommand{\bR}{\mathbb{R}}
\newcommand{\bZ}{\mathbb{Z}}
\newcommand{\supp}{\operatorname{supp}}
\newcommand{\ptt}{\mathbb{I}}
\newcommand{\brac}[1]{\langle #1\rangle_{\vec{a}}}
\newcommand{\ve}{\varepsilon}
\renewcommand{\epsilon}{\varepsilon}
\DeclareMathOperator{\sgn}{sgn}
\numberwithin{equation}{section}
\begin{document}
\title[Stable decomposition of Triebel-Lizorkin type spaces ]{Stable decomposition of homogeneous Mixed-norm Triebel-Lizorkin spaces } \author{Morten Nielsen }   \date{\today}
\begin{abstract}
We construct smooth localized orthonormal bases compatible with homogeneous mixed-norm  Triebel-Lizorkin spaces in an anisotropic setting on $\bR^d$. The construction is based on  tensor products of so-called univariate brushlet functions that are constructed using local trigonometric bases in the frequency domain. It is shown that the associated decomposition system form  unconditional bases for the homogeneous mixed-norm Triebel-Lizorkin spaces. 

In the second part of the paper we study nonlinear $m$-term nonlinear approximation with  the constructed basis in the mixed-norm setting, where the behaviour, in general, for $d\geq 2$, is shown to be fundamentally different from the unmixed case. However, Jackson and Bernstein inequalities for $m$-term approximation can still be derived. 
\end{abstract}
\subjclass{42B35, 42C15, 42C40}
\keywords{Smoothness space, Triebel-Lizorkin space, Besov space, nonlinear approximation, Jackson inequality, Bernstein inequality}
\maketitle

\section{Introduction}

The notion of sparse stable expansions of functions plays a fundamental role for many applications of harmonic analysis since it allows for discretization of various problems making them suitable for numerical analysis and general mathematical modeling, one example being wavelet expansions of $L_p$-functions, or of functions with some prescribed smoothness such as measured on the Besov space scale. The success of this approach is well-known  with a corresponding  diverse spectrum of algorithms for e.g.\ signal compression.

From the point of view of mathematical modeling, it is desirable to have as flexible tools as possible as it allows one to incorporate and  capture finer properties of various natural phenomena in the model. 
Recently, function spaces in anisotropic and mixed-norm settings have attached considerable interest, see for example \cite{MR3707993,MR3388786,MR3377120,MR2720206,MR2319603,MR2186983} and reference therein. This is in part driven by advances in the study of partial and pseudodifferential operators, where there is a natural desire to be able to better model and analyse anisotropic phenomena.

In this paper, we focus on stable expansions in 
 Besov and Triebel-Lizorkin spaces in an anisotropic setting.  The  Besov and Triebel-Lizorkin spaces form a  family of smoothness spaces defined on $\bR^d$ that include the Hardy and Lebesgue spaces as special cases. 
 
Recently, homogeneous Besov and Triebel–Lizorkin spaces in a mixed-norm anisotropic setting were introduced in \cite{MR3981281}, along with an adapted notion of a 
$\phi$-transform  that provides a universal, but redundant, decomposition of the various Besov and 
Triebel–Lizorkin spaces. This follows parallel to the original construction by Frazier and Jawerth, where they  successfully applied the $\phi$-transform to an in-depth study of some of the finer properties of the properties of isotropic  Besov spaces and
Triebel–Lizorkin space. Later the $\phi$-transform techniques were refined 
to obtain orthonormal wavelets, see \cite{MR1228209,MR1107300}.

The interest in mixed-norm smoothness spaces is not new. The Russian school investigated mixed-norm Besov, Sobolev and Bessel potential spaces, even in the anisotropic setting,  more than four decades ago \cite{MR521808,MR519341}.

The main contribution of the present paper is to offer a
construction of a universal orthonormal basis for $L_2(\bR^d)$, once the anisotropy has been fixed, that extends to  unconditional basis for all mixed-norm Besov and Triebel–Lizorkin spaces. The fact that the representation system is non-redundant allows for a more precise analysis of the approximation properties of the system. For example, it will be shown in Section \ref{sec:nl} that is possible to derive an inverse estimate of Bernstein type for the system. Obtaining a Bernstein estimate is a longstanding open problem for the redundant $\phi$-transform, even in the unmixed case, cf.\ \cite{MR1794807}.  

We believe that  our construction is the first
example of a universal non-redundant representation system for mixed-norm Besov and Triebel–Lizorkin spaces, except for unmixed cases associated with a lattice preserving dilation, where orthonormal $r$-regular wavelet bases have been constructed to provide such  expansion systems, see  \cite{bownik_construction_2001,calogero_wavelets_1999}.

The orthonormal basis is constructed in Section \ref{sec:onb} using a carefully calibrated
tensor product approach based on so-called univariate brushlet
systems.  Brushlets are the image of a local trigonometric basis
under the Fourier transform, and such systems were introduced in 
\cite{Laeng1990}. Brushlets have been used as a tool for image compression, see \cite{Meyer1997a}.  Univariate brushlet bases are extremely flexible and can be adapted to any type of exponential covering of the frequency axis, where e.g.\ wavelets are restricted to only dyadic decompositions. We will use this flexibility in each variable separately in the the tensor product construction presented in Section \ref{sec:onb} in order to obtain compatibility with the overall anisotropic structure on $\bR^d$. 
We give a full characterization of the mixed-norm Besov and Triebel–Lizorkin spaces in Section \ref{sec:5}, using the constructed orthonormal basis.

A relevant question that must be considered is whether the mixed-norm spaces really brings anything new to the table. In Section \ref{sec:nl}   we address this question by studying nonlinear $m$-term nonlinear approximation with  the constructed basis in the mixed-norm setting. Non-linear approximation with wavelets in the unmixed Triebel-Lizorkin setting has previously been studied in detail, see e.g.\  \cite{MR1175690,MR1866250}. Non-linear approximation in a discrete Besov and Triebel-Lizorkin space setting has been studied in \cite{Garrigos2004}.

In Section \ref{sec:nl}, we show  that the approximation behaviour in the mixed-case, in general, for $d\geq 2$, is fundamentally different from the unmixed case. In particular, it is shown that the constructed orthonormal basis cannot, in general, be normalized to form a greedy basis for the mixed-norm Triebel-Lizorkin spaces. Despite the lack of the greedy property, it is  shown in Section \ref{sec:nl} that Jackson and Bernstein inequalities for nonlinear $m$-term approximation can be derived. However, as it turn out, the inequalities have non-matching exponents, so a complete characterization of the approximation spaces for $m$-term apprioximation is not possible using the standard machinery of nonlinear $m$-term approximation.  

\section{The anisotropic setup and some preliminaries}
Let us first introduce the anisotropic structure on $\bR^d$ that will be used for defining the mixed-norm Besov and Triebel-Lizorkin spaces at the end of this section. 
Let $b,x\in\mathbb{R}^d$ and $t>0.$ We denote by $t^{b}x:=(t^{b_1}x_1,\dots,t^{b_d}x_d).$ We fix a vector $\vec{a}\in[1,\infty)^d$ and we introduce the anisotropic quasi-norm $|\cdot|_{\vec{a}}$ as follows: We set $|0|_{\vec{a}}:=0$ and for $x\neq0$ we set $|x|_{\vec{a}}:=t_0,$ there $t_0$ is the unique positive number such that $|t_0^{-\vec{a}}x|=1.$ One observes immediately that
\begin{equation}\label{ad1}
|t^{\vec{a}}x|_{\vec{a}}=t|x|_{\vec{a}},\;\;\text{for every}\;\;x\in\mathbb{R}^d,\;t>0.
\end{equation}
From this it follows that $|\cdot|_{\vec{a}}$ is not a norm unless  $\vec{a}=(1,\dots,1)$, in which case it is equivalent with the Euclidean norm $|\cdot|$.

The anisotropic distance can be directly compared to the Euclidean norm, see e.g.\ \cite{Bagby:1975uc,Bownik2006}, in the sense that there are  constants $c_1,c_2>0$ such that for every $x\in\mathbb{R}^d$, 

\begin{equation}\label{ad8}
c_1(1+|x|_{\vec{a}})^{a_m}\le1+|x|\le c_2(1+|x|_{\vec{a}})^{a_M},
\end{equation}
where we denoted $a_m:=\min_{1\le j\le d}a_j,\;a_M:=\max_{1\le j\le d}a_j.$

We will also need the following anisotropic bracket. We consider $(1,\vec{a})\in\bR^{d+1}$ and define
$$\brac{x}:=|(1,x)|_{(1,\vec{a})},\qquad x\in\bR^d.$$ 
This quantity has been studied in detail in \cite{Borup2008,Stein1978}.
It holds that there are constants $c_3,c_4>0$ such that 
\begin{equation}\label{bracket}
  c_3\brac{x}\leq 1+|x|_{\vec{a}}\leq c_4\brac{x},\qquad x\in\bR^d.
  \end{equation}
One can  show that there is a constant $c_5>0$ such that
\begin{equation}\label{brac_esti}
  \brac{x+y}\leq c_5 \brac{x} \brac{y},\qquad x,y\in\bR^d.
  \end{equation}
  Furthermore, we define the homogeneous dimension by 
\begin{equation}\label{nu}
  \nu:=|\vec{a}|:=a_1+\cdots+a_d,
  \end{equation}
which will play an important role later when we estimate certain maximal functions. Also,  by going to polar coordinates one can deduce that for $\tau>\nu$,
\begin{equation}\label{brac_int}
	\int_{\bR^d} \brac{x}^{-\tau}dx\le c_{\tau}<\infty.
\end{equation} 

We will also need balls adapted to the anisotrophy $\va$. Let $x\in\bR^d$ and $r>0$. We denote by $B_{\vec{a}}(x,r):=\{y\in\bR^d:\;|x-y|_{\vec{a}}<r\},$ the ball of radius $r$, centred at $x$. Note that $B_{\vec{a}}(x,r)$ is convex and $|B_{\vec{a}}(x,r)|=|B_{\vec{a}}(0,1)|r^\nu$.

\subsection{Maximal operators and mixed-norm Lebesgue spaces.}\label{sec:multi} Maximal operator estimates will be of fundamental importance for  our study of Triebel-Lizorkin spaces. Let $1\leq k\leq d$. We define
\begin{equation}\label{MK}
M_k f(x)=\sup\limits_{I\in I_x^k} \dfrac{1}{|I|} \int_I |f(x_1,\dots,y_k,\dots,x_d)| dy_k,\;\;f\in L^{1}_{loc}(\bR^d),
\end{equation}
where $I_x^k$ is the set of all intervals $I$ in $\mathbb{R}_{x_k}$ containing $x_k$.\

We will use extensively the following iterated maximal function:
\begin{equation}\label{Max1}
\cM_\theta f(x):=\left(M_d(\cdots(M_1|f|^\theta)\cdots)\right)^{1/\theta}(x),\;\theta>0,\;x\in\mathbb{R}^d.
\end{equation}

\begin{myre} If $R$ is a rectangle $R=I_1\times\dots\times I_n,$ it follows easily that for every locally integrable $f$
\begin{equation}\label{rectangle}
\int_R |f(y)| dy\leq |R| \cM_1 f(x)=|R|\cM_\theta^\theta|f|^{1/\theta}(x), \ \theta>0,\ x\in\bR^d.
\end{equation}
\end{myre}

Mixed-norm Lebesgue spaces will play an important role for defining mixed-norm smoothness spaces. Let $\vec{p}=(p_1,\dots,p_d)\in(0,\infty)^d$ and $f:\mathbb{R}^d\rightarrow \mathbb{C}$. We say that $f\in L_{\vec{p}}:=L_{\vec{p}}(\mathbb{R}^d)$ if
\begin{equation}\label{Lp}
\|f\|_{\vec{p}}:=\left(\int_\mathbb{R}\cdots\left(\int_\mathbb{R}\left(\int_\mathbb{R} |f(x_1,\dots,x_d)|^{p_1} dx_1\right)^{\frac{p_2}{p_1}} dx_2\right)^{\frac{p_3}{p_2}}\cdots dx_d\right)^{\frac{1}{p_d}}<\infty.
\end{equation}
The quasi-norm $\|\cdot\|_{\vec{p}},$ is a norm when $\min(p_1,\dots,p_d)\geq 1$ and turns $(L_{\vec{p}},\|\cdot\|_{\vec{p}})$ into a Banach space. Note that when $\vec{p}=(p,\dots,p),$ then $L_{\vec{p}}$ coincides with $L_p.$ For additional properties of $L_{\vec{p}}$, see for example \cite{Bagby:1975uc,MR126155}.

For $\vp=(p_1,\dots,p_d)\in (0,\infty)^d$, $0<q\leq \infty$, and  a sequence
$f=\{f_j\}_{j\in\bN}$ of $L_{\vec{p}}(\bR^d)$ functions, we define the (quasi-)norm
$$\|f\|_{L_{\vec{p}}(\ell_q)}:=\big\|\big(\sum_{j\in\bN} 
|f_j|^q\big)^{1/q}\big\|_{\vec{p}}.$$ 
Where there is no risk of ambiguity we will abuse notation slightly and write $\|f_k\|_{L_{\vec{p}}(\ell_q)}$ instead
of $\|\{f_k\}_k\|_{L_{\vec{p}}(\ell_q)}$.

 We shall need a variation of the Fefferman-Stein vector-valued maximal inequality, see \cite{MR2401510,Bagby:1975uc}. Suppose $\vec{p}=(p_1,\dots,p_d)\in(0,\infty)^d,\ 0<q\leq \infty$ and $0<\theta<\min(p_1,\dots,p_d,q)$, then
 
\begin{equation}
  \label{eq:fs}
  \|\{(\cM_\theta(f_j)\}\|_{L_{\vec{p}}(\ell_q)}\leq C_B\|\{f_j\}\|_{L_{\vec{p}}(\ell_q)},
\end{equation}
with $C_B:=C_B(\theta,\vec{p},q)$.

We mention that using a slight variation on standard techniques, see e.g.\ \cite[Proposition 3.3]{Borup2008}, the following Petree type estimate can be derived: For every $\theta>0,\;$there exists a constant $c=c_\theta>0,$ such that for every $t>0$, and $f$ with $\supp(\hat{f})\subset t^{\vec{a}}[-2,2]^d,$
\begin{equation}\label{M3}
\sup_{y\in\mathbb{R}^n} \dfrac{|f(y)|}{\brac{t^{\vec{a}}(x-y)}^{\nu/\theta}}\leq c\cM_\theta f(x),\;x\in\mathbb{R}^d.
\end{equation}   

By combining the Petree estimate with the Fefferman-Stein vector-valued maximal inequality, calling on \eqref{brac_esti} and \eqref{brac_int}, and again using a very slight variation on standard techniques, see e.g.\ \cite[Theorem 3.5]{Borup2008}, we can derive the multiplier result given in Proposition \ref{th:vecmult} below, which will be used in Section \ref{sec:5}. The details of the proof are left to the reader.

For $\Omega=\{\Omega_n\}$ a sequence of compact
subsets of $\bR^d$, we let
$$L_{\vec{p}}^\Omega(\ell_q):=\{\{f_n\}_{n\in\bN}\in
L_{\vec{p}}(\ell_q)\,|\,\supp(\hat{f}_n)\subseteq\Omega_n,\,\forall n\}.$$ 
For $s\in \bR_+$, and $f:\bR^d\rightarrow \mathbb{C}$, we let
$$\|f\|_{H_2^m}:=\biggl( \int |\mathcal{F}^{-1}f(x)|^2 \brac{x}^{2m}
dx\biggr)^{1/2}$$
denote the (anisotropic) Sobolev  norm, where the Fourier transform is defined by
$\mathcal{F}(f)(\xi):=(2\pi)^{-d/2}\int_{\bR^d} f(x)e^{-ix\cdot\xi}\,dx$, $f\in L_1(\bR^d)$.  We have,

\begin{myprop}\label{th:vecmult}
  Suppose $\vp\in (0,\infty)^d$ and
  $0<q\leq \infty$, and let $\Omega=\{T_k \mathcal{C}\}_{k\in\bN}$ be
  a sequence of compact subsets of $\bR^d$ generated by a family
  $\{T_k=t_k^{\vec{a}} \cdot+\xi_k\}_{k\in\bN}$ of invertible affine
  transformations on $\bR^d$, with $\mathcal{C}$ a fixed compact subset of
  $\bR^d$. Assume $\{\psi_j\}_{j\in \bN}$ is a
  sequence of functions satisfying $\psi_j\in H^m_2$ for some $m>\frac
  \nu2+\frac{\nu}{\min(p_1,\ldots,p_d,q)}$. Then there exists a constant $C<\infty$
  such that 
$$\|\{\psi_k(D)f_k\}\|_{L_{\vec{p}}(\ell_q)} \leq
C\sup_j\|\psi_j(T_j\cdot)\|_{H^m_2}\cdot
\|\{f_k\}\|_{L_{\vec{p}}(\ell_q)}$$
for all $\{f_k\}_{k\in \bN} \in L_{\vec{p}}^\Omega(\ell_q)$, where $\psi_k(D)f:=\mathcal{F}^{-1}\{\psi_k\mathcal{F}(f)\}$.
\end{myprop}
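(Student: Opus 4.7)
The plan is to reduce the vector-valued multiplier bound to a pointwise estimate of the form
\begin{equation*}
|\psi_k(D)f_k(x)|\le C\,\|\psi_k(T_k\cdot)\|_{H^m_2}\,\cM_\theta f_k(x),\qquad x\in\bR^d,
\end{equation*}
for a suitably chosen $\theta$ with $0<\theta<\min(p_1,\dots,p_d,q)$, and then to apply the Fefferman–Stein vector-valued maximal inequality \eqref{eq:fs} to pass to $L_{\vec{p}}(\ell_q)$. Once the pointwise bound is in hand, the desired estimate follows immediately after taking $\sup_j\|\psi_j(T_j\cdot)\|_{H^m_2}$ on the right.

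To derive the pointwise estimate, first write $\psi_k(D)f_k=K_k\ast f_k$ with $K_k:=\cF^{-1}\psi_k$, and note that setting $\tilde{\psi}_k:=\psi_k(T_k\cdot)$, a direct change of variables in the inverse Fourier transform yields $|K_k(x)|=t_k^{\nu}|\cF^{-1}\tilde{\psi}_k(t_k^{\vec{a}}x)|$. Since $\supp(\hat{f}_k)\subseteq T_k\cC$, a frequency shift $g_k(y):=e^{-iy\cdot\xi_k}f_k(y)$ (which does not alter $|f_k|$ or $\cM_\theta f_k$) puts the support of $\hat{g}_k$ inside $t_k^{\vec{a}}\cC$, so after a harmless rescaling the Peetre-type estimate \eqref{M3} gives
\begin{equation*}
|f_k(y)|\le c\,\brac{t_k^{\vec{a}}(x-y)}^{\nu/\theta}\cM_\theta f_k(x).
\end{equation*}
Inserting this into $|\psi_k(D)f_k(x)|\le\int|K_k(x-y)|\,|f_k(y)|\,dy$, using the identity above for $|K_k|$, and changing variables $z=t_k^{\vec{a}}(x-y)$ reduces matters to bounding $\int|\cF^{-1}\tilde{\psi}_k(z)|\brac{z}^{\nu/\theta}\,dz$. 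By Cauchy–Schwarz this is controlled by
\begin{equation*}
\|\tilde{\psi}_k\|_{H^m_2}\cdot\Bigl(\int\brac{z}^{2\nu/\theta-2m}\,dz\Bigr)^{1/2},
\end{equation*}
and the second factor is finite, by \eqref{brac_int}, as soon as $2m-2\nu/\theta>\nu$, i.e.\ $m>\nu/2+\nu/\theta$.

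The only balancing act — and the main technical point — is the joint choice of $\theta$: it must lie strictly below $\min(p_1,\dots,p_d,q)$ so that \eqref{eq:fs} applies, and simultaneously satisfy $m>\nu/2+\nu/\theta$ so that the integral above converges. Because the hypothesis $m>\nu/2+\nu/\min(p_1,\dots,p_d,q)$ is strict, one can pick $\theta$ slightly less than $\min(p_1,\dots,p_d,q)$ so that both conditions hold. With this choice, combining the pointwise bound with \eqref{eq:fs} gives the stated inequality, where the constant $C$ depends on $\vec{a}$, $\vec{p}$, $q$, $\theta$, $m$, and $\cC$ through the diameter used in the Peetre step. The argument follows the template of \cite[Thm.~3.5]{Borup2008}, the only novelty being the replacement of the $L_p$-norm by the mixed-norm $L_{\vec{p}}$-norm, which is handled entirely at the level of \eqref{eq:fs}.
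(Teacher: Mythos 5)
Your argument is correct and follows exactly the approach the paper indicates (the paper defers the details to the reader, citing the standard Peetre-plus-Fefferman--Stein template of \cite[Thm.~3.5]{Borup2008}): reduce to the pointwise bound $|\psi_k(D)f_k(x)|\lesssim\|\psi_k(T_k\cdot)\|_{H^m_2}\cM_\theta f_k(x)$ via the dilation identity for the kernel, a frequency translation to center the support at $t_k^{\vec a}\cC$, the Peetre estimate \eqref{M3}, and Cauchy--Schwarz against \eqref{brac_int}, then apply \eqref{eq:fs}. Your observation that one chooses $\theta$ strictly between $\nu/(m-\nu/2)$ and $\min(p_1,\dots,p_d,q)$ is indeed the point that makes the strict inequality on $m$ necessary, and the explicit modulation $g_k(y)=e^{-iy\cdot\xi_k}f_k(y)$ is a clean way to avoid invoking \eqref{brac_esti} directly.
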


\subsection{Schwartz functions and tempered distributions}

Finally, let us recall some basic facts about Schwartz functions and tempered distributions. We denote by $\cS=\cS(\bR^d)$ the Schwartz space of rapidly decreasing, infinitely differentiable functions on $\bR^d$. A function $\varphi\in\cC^{\infty}$ belongs to $\cS,$ when for every $k\in\bN_0$ and every multi-index $\alpha\in\bN_0^d$, with $\bN_0:=\bN\cup\{0\}$, there exists $c=c_{k,\alpha}>0$ such that
\begin{equation}\label{Snorm1}
|\partial ^\alpha \varphi(x)|\le c(1+|x|)^{-k},\;\;\text{for every}\;x\in\bR^d.
\end{equation}
The dual space $\cS'=\cS'(\bR^d)$ of $\cS$ is the space of tempered distributions.
We will denote by
$$\cS_{\infty}:=\cS_{\infty}(\bR^d)=\Big\{\psi\in\cS:\int_{\bR^d} x^\alpha \psi(x)dx=0,\;\forall \alpha \in \bN_0^d\Big\}.$$
We notice that $\cS_{\infty}$ is a Fr\'{e}chet space, because it is closed in $\cS$ and its dual is $\cS_{\infty}'=\cS'/\cP,$ where $\cP$ the family of polynomials on $\bR^d$. The anisotropic homogeneous mixed-norm Besov and Triebel-Lizorkin spaces will be based on distributions in $\cS'/\cP.$


\section{Dyadic rectangles, the {$\phi$}-transform, and mixed-norm smoothness spaces}

For $j\in\mathbb{Z}$ and $k\in\mathbb{Z}^d$, we denote by $Q_{j k}$ the dyadic rectangle
\begin{equation*}
Q_{j k}=\{(x_1,\dots,x_n)\in\mathbb{R}^d:k_i\leq 2^{ja_i}x_i<k_i+1,\;i=1,\dots,d\}.
\end{equation*}
For every $Q=Q_{j k}$ we denote by $x_Q=2^{-j\vec{a}}k$ the ``lower left-corner", and by $|Q|=2^{-\nu j}$, the volume. We also notice that
$$Q_{j k}=2^{-j\va}([0,1)^d)+x_{Q_{j k}}$$
For now on we will denote by $\mathcal{Q}_{j}$ the set of all dyadic rectangles of volume $|Q|=2^{-j\nu},\;j\in\bZ$ and by $\cQ$ the set of all dyadic rectangles. Note that for every $j\in\bZ,$ the set of all the dyadic rectangles of the same volume $\mathcal{Q}_{j}$ is a disjoint partition of $\bR^d.$

We now briefly present the resolution of the identity that will be needed for the $\varphi$-transform and will form the framework for introducing the mixed-norm Besov and Tiebel-Lizorkin spaces. 
It is known, see e.g. \cite[Lemma 6.1.7]{Bergh1976}, that we can choose  $\varphi, \psi$ satisfying the following conditions
\begin{equation}\label{phi1}
\varphi,\psi\in\cS(\mathbb{R}^d),
\end{equation}
\begin{equation}\label{phi2}
\text{supp} \ \hat{\varphi},\hat{\psi}\subseteq\{\xi\in\mathbb{R}^d:\;2^{-1}\leq|\xi|\leq2\},
\end{equation}
\begin{equation}\label{phi3}
|\hat{\varphi}(\xi)|, |\hat{\psi}(\xi)|\geq c>0 \ \ \ \text{if} \ \ 2^{-3/4}\leq |\xi|\leq2^{3/4},
\end{equation}
and
\begin{equation}\label{phi4}
\sum\limits_{j\in \mathbb{Z}} \overline{\hat{\varphi}(2^{-j\vec{a}}\xi)}\hat{\psi}(2^{-j\vec{a}}\xi)=1 \ \ \ \text{if} \ \ \xi\neq 0.
\end{equation}

We then put $\varphi_{j}(x)=2^{j\nu }\varphi(2^{j\vec{a}}x)$ and define $\psi_{j},\;j\in\mathbb{Z}$, in a similar way.
We  have $\widehat{\varphi_{j}}(\xi)=\widehat{\varphi}(2^{-j\vec{a}}\xi)$ for every $\xi\in\mathbb{R}^d,$ so by (\ref{phi2})
\begin{equation}\label{phi6}
\supp(\widehat{\varphi_j}),\;\supp(\widehat{\psi_j})\subseteq2^{j\vec{a}}\{\xi\in\mathbb{R}^d:2^{-1}\le|\xi|\leq2\}=:T_j.
\end{equation}
We deduce from (\ref{ad1})-(\ref{ad8}) that there exists an $M\in\mathbb{N}$, such that $T_j\cap T_i= \emptyset$ when $|i-j|>M$.\

For every $\tau >0$ there exists a constant $c_\tau>0$ such that
\begin{equation}\label{phi40}
|\varphi_j(x)|, |\psi_j(x)|\leq c_\tau 2^{j\nu}\brac{2^{j\vec{a}}x}^{-\tau},\qquad x\in\bR^d.
\end{equation}

\begin{mydef}\label{adm}
Functions $\varphi,\psi$ satisfying {\rm(\ref{phi1})-(\ref{phi3})} will be called admissible.
\end{mydef}

\subsection{Mixed norm smoothness spaces}

We now recall the definition of  anisotropic mixed-norm Triebel-Lizorkin and Besov spaces.

For  $\varphi,\psi$ are admissible, we have the following definition of the homogeneous anisotropic mixed-norm Triebel-Lizorkin and Besov spaces

\begin{mydef}
Let $s\in\mathbb{R},\;\vec{p}\in(0,\infty)^d,\;q\in(0,\infty]$ and $\vec{a}\in(0,\infty)^d.$ The anisotropic homogeneous mixed-norm Besov space $\dot{B}^s_{\vec{p}q}(\vec{a})$ is defined as the set of all $f\in \mathcal{S}'(\bR^d)/ \mathcal{P}$ such that 
\begin{equation}\label{Bnorm}
\|f\|_{\dot{B}^s_{\vec{p}q}(\vec{a})}:= \Big( \sum _{j\in\mathbb{Z}} (2^{sj}\|\varphi_{j}\ast f\|_{\vec{p}})^q \Big) ^{1/q}<\infty,
\end{equation}
and the anisotropic homogeneous mixed-norm Triebel-Lizorkin space $\dot{F}^s_{\vec{p}q}(\vec{a})$ is defined as the set of all $f\in \mathcal{S}'(\bR^d)/ \mathcal{P}$ such that 
\begin{equation}\label{TLnorm}
\|f\|_{\dot{F}^s_{\vec{p}q}(\vec{a})}:=\Big \| \Big( \sum _{j\in\mathbb{Z}} (2^{sj}|\varphi_{j}\ast f|)^q \Big) ^{1/q}\Big \|_{\vec{p}}<\infty,
\end{equation}
with the $\ell_q$-norm replaced by $\sup _{j}$ if $q=\infty$ for both $\dot{B}^{s}_{\vec{p}q}(\vec{a})$ and $\dot{F}^{s}_{\vec{p}q}(\vec{a})$.
\end{mydef}
We will also need the associated discrete versions of the functions spaces.
\begin{mydef}\label{def:dT-L}
For $s\in\mathbb{R},\;\vec{p}=(p_1,\dots,p_d)\in(0,\infty)^d,\;q\in(0,\infty]$ and $\vec{a}\in(0,\infty)^d$ we define the sequence space $\dot{b}^s_{\vec{p}q}(\vec{a})$, as the set of all complex-valued sequences $a=\{a_Q\}_{Q\in\mathcal{Q}}$ such that
\begin{equation}\label{dBnorm}
\|a\|_{\dot{b}^s_{\vec{p}q}(\vec{a})}:=\Big(\sum_{j \in \mathbb{Z}} \Big\|\sum_{Q\in \cQ_{j}} |Q|^{-s/\nu}|a_R|\widetilde{\mathds{1}}_Q(\cdot)\Big\|_{\vec{p}}^q \Big)^{1/q} <\infty,
\end{equation}
and the sequence space $\dot{f}^s_{\vec{p}q}(\vec{a})$, as the set of all complex-valued sequences $a=\{a_Q\}_{Q\in\mathcal{Q}}$ such that
\begin{equation}\label{dTLnorm}
\|a\|_{\dot{f}^s_{\vec{p}q}(\vec{a})}:=\Big \|\Big(\sum_{j \in \mathbb{Z}} \sum_{Q\in \cQ_{j}} (|Q|^{-s/\nu}|a_R|\widetilde{\mathds{1}}_Q(\cdot))^q \Big)^{1/q}\Big \|_{\vec{p}} <\infty,
\end{equation}
where $\widetilde{\mathds{1}}_Q=|Q|^{-1/2} \mathds{1}_Q,$ the function $\mathds{1}_Q$ is the characteristic of the rectangle $R$ and the $\ell_q$-norm is replaced by the $\sup_{j}$ if $q=\infty$.
\end{mydef}

We are now ready to define the $\varphi$-transform $S_\varphi$ that provides an important and  universal discrete decomposition of the various smoothness spaces on the Besov and Triebel-Lizorkin sacale.   
\begin{mydef}\label{phi-transform}
Let $\varphi$ be an admissible function. The $\varphi$-transform $S_\varphi,$ or the analysis operator, is the map sending each $f\in\cS'/\mathcal{P}$  to the complex-valued sequence
\begin{equation*}
S_\varphi f=\{(S_\varphi f)_Q\}_Q, \ \text{with} \ (S_\varphi f)_Q= \langle f, \varphi_Q \rangle, \ \text{for every } Q\in \mathcal{Q}.
\end{equation*}

Let $\psi$ be admissible. The so called inverse $\varphi$-transform $T_\psi,$ or the synthesis operator, is the map taking a sequence $a=\{a_Q\}_Q$ to $T_\psi a=\sum \limits_{Q\in\mathcal{Q}} a_Q\psi_Q$.
\end{mydef}

\begin{myre} It is not entirely trivial  that $T_\psi$ is well defined. For an approach  to this question in the mixed-norm case, see \cite{MR3669781}.
\end{myre} 
One of the main results presented in \cite{MR3981281,Cleanthous:2017}, extending the seminal work by Frazier and Jawerth  \cite{Frazier1985,Frazier1990},  is the following discrete decomposition and norm characterisation using  the $\varphi$-transform.

\begin{mytheorem}\label{th:main}
Let $s\in\mathbb{R},\;\vec{p}=(p_1,\dots,p_d)\in(0,\infty)^d,\;q\in(0,\infty],\;\vec{a}\in[1,\infty)^n$ and $\varphi, \psi$ admissible. The $\varphi$-transform $S_\varphi:\dot{F}^{s}_{\vec{p}q}(\vec{a})\rightarrow \dot{f}^{s}_{\vec{p}q}(\vec{a})$ and the inverse $\varphi$-transform $T_\psi: \dot{f}^{s}_{\vec{p}q}(\vec{a})\rightarrow\dot{F}^{s}_{\vec{p}q}(\vec{a})$ are bounded. Furthermore $T_\psi\circ S_\varphi$ is the identity on $\dot{F}^{s}_{\vec{p}q}(\vec{a})$. In particular, $\|f\|_{\dot{F}^{s}_{\vec{p}q}(\vec{a})}\sim \|S_\varphi f\|_{\dot{f}^{s}_{\vec{p}q}(\vec{a})}$ for every $f\in\dot{F}^{s}_{\vec{p}q}(\vec{a})$. The same characterisation result holds for the Besov spaces $\dot{B}^{s}_{\vec{p}q}(\vec{a})$ with associated sequence space  $\dot{b}^{s}_{\vec{p}q}(\vec{a})$.
\end{mytheorem}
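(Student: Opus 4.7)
The plan is to follow the Frazier--Jawerth template of \cite{Frazier1985,Frazier1990}, adapted to the mixed-norm anisotropic geometry. The two workhorses will be the Peetre estimate \eqref{M3}, which converts pointwise evaluations of band-limited functions into the anisotropic maximal function $\cM_\theta$, and the vector-valued Fefferman--Stein inequality \eqref{eq:fs}, which transfers those maximal bounds into the $L_{\vec{p}}(\ell_q)$ norm defining $\dot{F}^{s}_{\vec{p}q}(\va)$. I focus on the Triebel--Lizorkin statement; the Besov version is structurally parallel but uses only the scalar mixed-norm maximal inequality and so is strictly simpler.

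For the boundedness of $S_\varphi$, I would fix $\theta\in(0,\min(p_1,\dots,p_d,q))$ and exploit that, for $Q\in\cQ_j$, the coefficient $\langle f,\varphi_Q\rangle$ equals (up to an $L^2$ normalization) $|Q|^{1/2}(\varphi_j*f)(x_Q)$, while $\varphi_j*f$ has Fourier support inside $2^{j\va}[-2,2]^d$ by \eqref{phi6}. The Peetre bound \eqref{M3} then gives, for each $x\in Q$,
\[
|(\varphi_j*f)(x_Q)|\leq C\,\brac{2^{j\va}(x-x_Q)}^{\nu/\theta}\cM_\theta(\varphi_j*f)(x)\leq C'\,\cM_\theta(\varphi_j*f)(x),
\]
since $\brac{2^{j\va}(x-x_Q)}\lesssim 1$ uniformly for $x\in Q$. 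Because $\cQ_j$ tiles $\bR^d$, this upgrades to the pointwise inequality $\sum_{Q\in\cQ_j}|\langle f,\varphi_Q\rangle|\,\widetilde{\mathds{1}}_Q(x)\lesssim\cM_\theta(\varphi_j*f)(x)$, and inserting this into the $\dot{f}^{s}_{\vec{p}q}(\va)$ norm and applying \eqref{eq:fs} closes the estimate against $\|f\|_{\dot{F}^{s}_{\vec{p}q}(\va)}$.

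For the boundedness of $T_\psi$, write $g_k=\sum_{Q\in\cQ_k}a_Q\psi_Q$ and $A_k=\sum_{Q\in\cQ_k}|a_Q|\widetilde{\mathds{1}}_Q$. The support condition \eqref{phi6} and the finite-overlap constant $M$ force $\varphi_j*T_\psi a=\sum_{|k-j|\leq M}\varphi_j*g_k$, so it suffices to prove the molecular estimate $|\varphi_j*g_k(x)|\lesssim \cM_\theta(A_k)(x)$ uniformly in $|k-j|\leq M$. I would obtain this by inserting the decay \eqref{phi40} for $\psi_Q$, splitting the resulting sum into anisotropic dyadic shells $\{y:\brac{2^{k\va}(x-y)}\sim 2^m\}$, and using a Peetre-type bound on each shell with $\tau>\nu/\theta$; the smoothing by $\varphi_j$ is then absorbed either by a second Peetre estimate or by applying Proposition~\ref{th:vecmult} to the band-limited family. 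The vector-valued maximal inequality \eqref{eq:fs} finishes the $\dot{F}^{s}_{\vec{p}q}(\va)$-bound. The reproducing identity $T_\psi S_\varphi f=f$ follows from the Calder\'on resolution \eqref{phi4}, which gives $f=\sum_j \widetilde{\psi}_j*\varphi_j*f$ in $\cS'/\cP$, combined with an anisotropic Shannon sampling argument on the lattice $2^{-j\va}\bZ^d$: since $\varphi_j*f$ is band-limited to the rectangle $T_j$, its values at the sampling nodes $\{x_Q:Q\in\cQ_j\}$ reconstruct the continuous convolution $\widetilde{\psi}_j*\varphi_j*f$ as $\sum_{Q\in\cQ_j}\langle f,\varphi_Q\rangle\psi_Q$, and summing over $j$ yields $T_\psi S_\varphi f$. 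Convergence of the intermediate series in $\cS'/\cP$ is guaranteed by the norm bounds already in hand.

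The main obstacle is the molecular pointwise estimate in the $T_\psi$ step: the decay parameter $\tau$ in \eqref{phi40} and the exponent $\theta$ in $\cM_\theta$ must be chosen \emph{simultaneously} so that the anisotropic summation $\sum_{Q\in\cQ_k}\brac{2^{k\va}(x-x_Q)}^{-\tau}$ is dominated by $\cM_\theta(A_k)(x)$, while $\theta<\min(p_1,\dots,p_d,q)$ remains admissible for \eqref{eq:fs}. This is the point where the homogeneous dimension $\nu$ and the full anisotropy $\va$ interact sharply with the mixed-norm integrability vector $\vec{p}$, and where a naive transcription of the classical isotropic argument needs genuine care.
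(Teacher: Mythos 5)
The paper does not actually prove Theorem~\ref{th:main}: immediately before the statement it says ``One of the main results presented in \cite{MR3981281,Cleanthous:2017}, extending the seminal work by Frazier and Jawerth \cite{Frazier1985,Frazier1990}, is the following\ldots''. The theorem is quoted as a known result and serves as background; the paper's own contribution begins in Section~\ref{sec:onb}, where the $\varphi$-transform characterization is \emph{used} to establish the brushlet characterization (Theorem~\ref{prop:modu}). So there is no ``paper's own proof'' for your argument to be compared against.

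That said, your sketch does reproduce the Frazier--Jawerth template that the cited papers \cite{MR3981281,Cleanthous:2017} actually employ: Peetre-type maximal estimate \eqref{M3} applied to the band-limited blocks $\varphi_j*f$ to control $S_\varphi$, a molecular/almost-diagonal estimate plus the Fefferman--Stein inequality \eqref{eq:fs} to control $T_\psi$, and the Calder\'on reproducing formula \eqref{phi4} combined with sampling (Poisson summation on the lattice $2^{-j\va}\bZ^d$, which is admissible since $T_j\subset 2^{j\va}[-2,2]^d$ and $2<\pi$) for the identity $T_\psi\circ S_\varphi=\mathrm{Id}$. The one place where your outline is genuinely under-specified---and where the anisotropic mixed-norm setting departs most from the isotropic case---is the molecular bound in the $T_\psi$ step: you correctly flag the need to choose $\tau$ in \eqref{phi40} large enough relative to $\nu/\theta$ with $\theta<\min(p_1,\ldots,p_d,q)$, but you do not carry out the shell-summation over $\cQ_k$ that shows $\sum_{Q\in\cQ_k}\brac{2^{k\va}(x-x_Q)}^{-\tau}|a_Q|\,|Q|^{-1/2}\lesssim\cM_\theta(A_k)(x)$. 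This is precisely the computation that \cite{MR3981281} makes precise (their ``almost-diagonality'' and molecular estimates), and it is the crux of the result rather than an afterthought. Your overall route is sound and matches the literature; for a self-contained proof you would need to supply that estimate explicitly.
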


One potential issue with the decomposition system $\{\psi_R\}_R$ is the redundancy of  the system, which makes the analysis of the systems approximation properties much more difficult, see the discussion in \cite{MR1794807}. The goal in the sequel is to construct an orthonormal basis for $L_2(\bR^d)$, i.e., a non-redundant system, that will support a similar type of universal decomposition of $\dot{B}^{s}_{\vec{p}q}(\vec{a})$  and  $\dot{F}^{s}_{\vec{p}q}(\vec{a})$, respectively, as provided by Theorem \ref{th:main}.
\section{Orthonormal   brushlet bases} \label{sec:onb}

A universal method to create orthonormal bases for multivariate $L_2$ is to use 
a tensor product construction based on a univariate orthornomal basis for $L_2(\bR)$. Often this method works well, but the  approach can be considered problematic for wavelet bases and similar systems as basis elements with long "skinny" support in the frequency plane are created. Such elements are often not  well-adapted for the analysis of smoothness spaces of Besov or Triebel-Lizorkin type due to the incompatible time-frequency structure. In this section, we will still rely on a tensor product construction, but it will be based on a more stable decomposition of the frequency space. The specific coverings will be introduced in Section \ref{sec:cov}, and the corresponding bases will be based on tensor products of so-called univariate brushlets. The construction of brushlets is a well-establish approach \cite{Laeng1990,Meyer1997a}, but for the convenience of the reader we have included a summary of the needed facts about brushlets in Appendix \ref{app:brush}.

\subsection{Lizorkin coverings of {$\bR^d$} and associated decomposition systems}\label{sec:cov}
In order to combine individual multivariate brushlet systems in a coherent way to obtain an orthonormal basis for the full space $L^2(\bR^d)$,  it will be convenient to have a suitable partition made up of rectangles  of the frequency space $\bR^d\backslash \{0\}$. Moreover, the partition should also be constructed to be compatible with the frequency structure of the anisotropic Triebel-Lizorkin space. We will now introduce such partitions based on an idea  first considered by Lizorkin in an isotropic setting.

 Given an anisotropy
$\mathbf{a}=(a_1, \ldots, a_d)$, define the rectangles
$$R_j = \{x\colon |x_i|\leq 2^{ja_i},\; i=1,\ldots, d\},\qquad
\text{for }\; j\in \bZ,$$
and the corridors $K_j = R_j\setminus R_{j-1}$ for $j\in \bZ$.
Let $E_2=\{\pm 1,\pm 2\}$, $E_1:=\{\pm 1\}$, and put $E:=
  E_2^n\setminus E_1^n$, where we notice that $|E|=4^n-2^n$.  For each $j\in \bZ$,
and $k\in E$, define
$$R_{j,k}=\{ x\in \bR^d\colon \sgn(x_i)=\sgn(k_i),\; \text{and}\;
(|k_i|-1)2^{j-1}\leq |x_i|^{1/a_i}< |k_i|2^{j-1}\},$$ where the construction ensures that $K_j = \cup_{k\in E}R_{j,k}$.
Clearly, $R_{j,k}$ is a $d$-dimensional rectangle, so we may write
$$R_{j,k}=I^1_{j,k_1}\times I^2_{j,k_2}\times\cdots\times I_{j,k_d}^d,\qquad j\in\bZ,k\in E,$$
where each $I^i_{j,k}$ is an half-open interval in $\bR$. Letting $c_{j,k}\in\bR^d$ denote the center of $R_{j,k}$, we also have the affine representation
$$R_{j,k}=2^{j\va}B(k)\big([-1,1)^d\big)+c_{j,k},      $$
where $B(k):\bR^d\rightarrow \bR^d$, $k\in E$, is the matrix given by
$$B(k)=\text{diag}(b(k)_1,\ldots,b(k)_d),\quad\text{with }\quad b(k)_i:=\begin{cases}2^{-(a_i+1)},&\text{if }|k_i|=1\\
(1-2^{-a_i})/2,&\text{if }|k_i|=2.\end{cases}$$
The family $\cR:=\{R_{j,k}\}_{j\in\bZ,k\in E}$ gives an anisotropic Lizorkin partition
of $\bR^d\backslash\{0\}$. For notational convenience, we also define $\cR_j:=\{R_{j,k}\}_{k\in E}$.

We now construct univariate brushlet bases, following the outline in Appendix \ref{app:brush}, adapted to the intervals  $\{I_{j,k_i}^i\}_{k\in E}$. More specifically, we consider the intervals,
\begin{align}\label{eq:part}
 \big\{\pm \big[0,2^{(j-1)a_i}\big)\big\}\cup
\big\{ \pm \big[2^{(j-1)a_i},2^{ja_i}\big)\big\}
\end{align}
where we choose corresponding cutoff radii  $2^{(j-2)a_i}$ at $\pm 2^{ja_i}$ and  cutoff radii $2^{(j-3)a_i}$ at $\{0,\pm 2^{(j-1)a_i}\}$, where we again refer to Appendix \ref{app:brush} for a discussion of cutoff radii.

For each $R_{j,k}=I^1_{j,k_1}\times I^2_{j,k_2}\times\cdots\times I_{j,k_d}^n$, $j\in\bZ,k\in E$, we define the orthonormal brushlet system, 
$$\{w_{n,R_{j,k}}:n\in \bN_0^d\},$$
with $w_{n,R_{j,k}}$ defined as the  tensor product in Eq.\ \eqref{eq:multivariate}, see Appendix \ref{app:brush},  $$w_{n,R_{j,k}}:=\bigotimes_{i=1}^d w_{n_i,I^i_{j,k_i}}.$$
\begin{myre}\label{rem:wd}
It is important to observe that by the construction in Appendix \ref{app:brush}, $w_{n,R_{j,k}}\in \cS_\infty(\bR^d)$, since the (compact) frequency support of the smooth function $w_{n,R_{j,k}}$ does not contain the zero frequency. Hence, the expansion coefficient $\langle f, w_{n,R_{j,k}}\rangle$ is well-defined for $f\in \cS'/\cP$ (and, in particular, for $f\in \dot{F}_{\vp q}^s\cup\dot{B}_{\vp q}^s$). 
\end{myre}
We claim that the full system
\begin{equation}\label{eq:W}
    \cW:=\{w_{n,R_{j,k}}:n\in \bN_0^d,j\in\bZ,k\in E\},
\end{equation}
is an orthonormal basis for $L_2(\bR^d)$ that provides the wanted universally stable decomposition of the Besov and Triebel-Lizorkin spaces. The claim will be substantiated in the following sections.
\section{Stability of the multivariate brushlet systems}\label{sec:5}
We have the following fundamental property of the system $\{w_{n,R_{j,k}}\}$ defined in the previous section.

\begin{myprop}\label{prop:onb}
Given any anisotropy
$\mathbf{a}=(a_1, \ldots, a_d)\in (0,\infty)^d$, the corresponding system $\cW$ defined in Eq.\ \eqref{eq:W} forms an orthonormal basis for $L_2(\bR^d)$.
\end{myprop}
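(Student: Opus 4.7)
The plan is to reduce the multivariate claim to the one-dimensional brushlet theory recalled in Appendix \ref{app:brush}, exploiting the tensor-product definition
$$w_{n,R_{j,k}} = \bigotimes_{i=1}^d w_{n_i,I^i_{j,k_i}}.$$
The argument splits into (i) pairwise orthogonality of the elements of $\cW$ and (ii) completeness in $L_2(\bR^d)$.

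For (i), orthogonality of $\{w_{n,R_{j,k}}\}_{n\in\bN_0^d}$ for fixed $(j,k)$ is immediate from the tensor structure together with the univariate orthonormality of $\{w_{m,I}\}_{m\in\bN_0}$ for a single interval $I$ (Appendix \ref{app:brush}). For distinct labels $(j,k)\neq(j',k')$, at least one coordinate $i$ yields $I^i_{j,k_i}\neq I^i_{j',k'_i}$, so it suffices to establish $\langle w_{m,I^i_{j,k_i}}, w_{m',I^i_{j',k'_i}}\rangle = 0$ for that coordinate. The standard local trigonometric orthogonality lemma applies provided the cutoff radii assigned at each shared endpoint agree from both sides, and I would verify this case by case from the specification in Section \ref{sec:cov}: at the shared boundary $\pm 2^{ja_i}$, the level-$j$ prescription is $2^{(j-2)a_i}$, while the level-$(j+1)$ prescription, where the same point is viewed as $\pm 2^{((j+1)-1)a_i}$, is $2^{((j+1)-3)a_i}=2^{(j-2)a_i}$, so they match; an analogous check handles the origin and the midpoints $\pm 2^{(j-1)a_i}$. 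For (ii), since $\cR=\{R_{j,k}\}$ covers $\bR^d\setminus\{0\}$ and the cutoff matching from (i) guarantees that adjacent bells combine as $b_{R}^2+b_{R'}^2\equiv 1$ on transition zones, the tensor-product form of the univariate completeness statement in Appendix \ref{app:brush} yields
$$L_2(\bR^d) = \bigoplus_{(j,k)\in\bZ\times E} \overline{\spn\{w_{n,R_{j,k}}: n\in\bN_0^d\}},$$
so $\cW$ is a total orthonormal system.

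The principal obstacle is the combinatorial bookkeeping of the cutoff calibration across levels in the anisotropic setting, where up to $2^d$ rectangles of different sizes can meet at a single point: one must confirm that every break-point along every coordinate axis receives the same cutoff radius from every pair of abutting rectangles, and that the assigned cutoff radii remain within the size constraints (no more than half the shortest adjacent interval) required by the bell construction of Appendix \ref{app:brush}. Once this calibration check is complete, the proof collapses to invoking the one-dimensional theory coordinatewise.
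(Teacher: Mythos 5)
Your treatment of orthogonality is essentially the same as the paper's and is correct: you reduce to one coordinate via the tensor structure, and the calibration check at the outer endpoints $\pm 2^{ja_i}$ (level-$j$ cutoff $2^{(j-2)a_i}$ versus level-$(j+1)$ "midpoint" cutoff $2^{((j+1)-3)a_i}=2^{(j-2)a_i}$) is exactly the right thing to verify. One small caveat: at the origin the cutoffs do \emph{not} agree across levels ($2^{(j-3)a_i}$ at level $j$ versus $2^{(j-2)a_i}$ at level $j+1$), but this is harmless because the intervals $[0,2^{(j-1)a_i})$ and $[0,2^{ja_i})$ overlap rather than abut, so the local-trigonometric adjacency lemma is never invoked there; you should say this explicitly rather than asserting "an analogous check."

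The real problem is with completeness. You write that the cutoff matching "guarantees adjacent bells combine as $b_R^2+b_{R'}^2\equiv 1$ on transition zones," and that "the tensor-product form of the univariate completeness statement" then gives the orthogonal direct-sum decomposition. This step does not follow. The Auscher--Weiss--Wickerhauser completeness theorem applies to a moderate disjoint covering of $\R$ (giving $\sum_I \mathcal{P}_I = \mathrm{Id}$ on $L_2(\R)$), and its tensor-product consequence is a statement about \emph{product} partitions of $\R^d$, i.e., coverings of the form $\ptt_1\times\cdots\times\ptt_d$. The Lizorkin family $\cR=\{R_{j,k}\}_{j\in\Z,k\in E}$ is emphatically \emph{not} a product partition: at each level $j$ it omits the inner block indexed by $E_1^d=\{\pm1\}^d$, and across levels the one-dimensional interval families overlap rather than partition $\R$. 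Consequently the operators $\{P_R\}_{R\in\cR}$ do not come from any single tensor product of univariate resolutions of the identity, and the one-dimensional completeness does not "collapse" to the multivariate statement coordinatewise.

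The paper resolves this with an argument you would need: for fixed $j$ it augments the index set to $\tilde E := E_2^d \supsetneq E$, so that
$$\sum_{k\in\tilde E} P_{R_{j,k}} = P_{I_j^1}\otimes\cdots\otimes P_{I_j^d}
\quad\text{and}\quad
\sum_{k\in E_1^d} P_{R_{j,k}} = P_{I_{j-1}^1}\otimes\cdots\otimes P_{I_{j-1}^d},$$
and then subtracts to get $\sum_{k\in E} P_{R_{j,k}}$ as a difference of genuine tensor-product projections; the sum over $j$ then telescopes, and convergence to the identity in the strong operator topology follows from the expansion/shrinkage of the $I_j^i$. Without something like this telescoping device (or another concrete mechanism for summing the $P_R$'s), the completeness claim is a gap, not a reduction to the univariate theory.
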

The proof on the Proposition can be found in Appendix \ref{s:app}.



\subsection{Characterizations of $\dot{M}_{\vp,q}^{s}(\va)$ and $\dot{F}_{\vp,q}^{s}(\va)$}
We claim that the spaces $\dot{F}_{\vp,q}^{s}(\va)$ and $\dot{M}_{\vp,q}^{s}(\va)$ can be completely characterized using the brushlet system considered in Proposition \ref{prop:onb}. We focus on proving this claim for the Triebel-Lizorkin type spaces $\dot{F}_{\vp,q}^{s}(\va)$. For many of the proofs in this Section, we will call on results on vector-valued multiplies as discussed in Section \ref{sec:multi}. The corresponding results for the Besov spaces $\dot{M}_{\vp,q}^{s}(\va)$ are easier to handle due to the (simpler) structure of the space, and we will just state the results for $\dot{M}_{\vp,q}^{s}(\va)$ and leave most details to the reader.

For $R:=R_{j,k}=I^1_{j,k_1}\times I^2_{j,k_2}\times\cdots\times I_{j,k_d}^d\in\cR$, we define for $\cn\in \bN_0^d$,

\begin{equation}
  \label{eq:QD}
  U(R,\cn)=\left\{y\in\bR^d:
2^{j\vec{a}} y-\pi{\left(\cn+\ca\right)}\in [-1,1]^d\right\},
\end{equation}
where $\ca:=[\frac12,\ldots,\frac12]^T\in\bR^d$.
It is easy to verify there exists $L<\infty$ so that uniformly in $x$
and $R$,  $\sum_n\mathds{1}_{U(R,\cn)}(x)\leq L$. One may also verify that for $\cn,\cn'\in \bN_0^d$, $U(R,\cn')=U(R,\cn)+\pi2^{-j\vec{a}}(\cn'-\cn)$, and that $|U(R,n)|=2^{-j\nu}\asymp |R|^{-1}$.

Let us now prove that the canonical coefficient operator is bounded on
$\dot{F}^s_{\vp,q}(\va)$. We mention that the operator is well-defined according to  Remark \ref{rem:wd} in the sense that the quantity $\mathcal{S}_q^s(f)$ defined below in Proposition \ref{prop:normchar} can be computed for any $f\in\cS'/\mathcal{P}$.

\begin{myprop}\label{prop:normchar}

Suppose $s\in\bR$, $\vp\in (0,\infty)^d$, and $0<q\leq \infty$. Then
$$\|\mathcal{S}_q^s(f)\|_{\vp}\leq C\|f\|_{\dot{F}^{s}_{\vp,q}(\va)},\qquad f\in \dot{F}^{s}_{\vp,q}(\va),$$
where
\begin{equation}
    \label{eq:Sq}
\mathcal{S}_q^s(f):=\Big(\sum_{R \in \mathcal{R}} \sum_{n\in \bN_0^d} (|R|^{s/\nu}|\langle
f,w_{\cn,R}\rangle|\widetilde{\mathds{1}}_{U(R,n)}(\cdot))^q \Big)^{1/q},
    \end{equation}
with $\widetilde{\mathds{1}}_{U(R,n)}:=|R|^{1/2}\mathds{1}_{U(R,n)}$. 
\end{myprop}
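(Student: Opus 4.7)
The plan is to derive a pointwise bound of the form $\cS_q^s(f)(x) \lesssim \big(\sum_\ell (2^{\ell s}\cM_\theta(\psi_\ell * f)(x))^q\big)^{1/q}$ and then conclude via the Fefferman--Stein vector-valued maximal inequality~\eqref{eq:fs}. Starting from the resolution of identity~\eqref{phi4}, the Calder\'on reproducing formula $f = \sum_{\ell\in\bZ}\widetilde\varphi_\ell * \psi_\ell * f$ (in $\cS'/\cP$, with $\widetilde g(x):=\overline{g(-x)}$) gives
$$\langle f,w_{n,R}\rangle = \sum_{\ell\in\bZ}\langle \psi_\ell * f,\, \varphi_\ell * w_{n,R}\rangle;$$
since $\widehat{w}_{n,R}$ is supported in $R \subset T_j$ for $R=R_{j,k}\in\cR_j$, the finite overlap of $\{T_\ell\}$ noted after~\eqref{phi6} reduces this to a sum over $|\ell-j|\le M$.

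The central and most delicate step is to prove, for some $\theta\in(0,\min(p_1,\ldots,p_d,q))$,
\begin{equation}\label{eq:pt_plan}
|\langle \psi_\ell * f,\, \varphi_\ell * w_{n,R}\rangle| \leq C\, |R|^{-1/2}\, \cM_\theta(\psi_\ell * f)(x),
\end{equation}
uniformly in $j,k,n$, $|\ell-j|\le M$, and $x\in U(R,n)$. The Peetre-type bound~\eqref{M3} applied to $\psi_\ell * f$ (whose spectrum lies in $T_\ell\subset 2^{\ell\va}[-2,2]^d$) yields $|(\psi_\ell * f)(y)|\le c\,\cM_\theta(\psi_\ell * f)(x)\brac{2^{\ell\va}(x-y)}^{\nu/\theta}$. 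To dominate the other factor, one establishes the anisotropic rapid-decay estimate
$$|w_{n,R}(y)| \le C_N |R|^{1/2}\brac{2^{j\va}(y-y_{R,n})}^{-N},\qquad N>0,$$
where $y_{R,n}$ is the center of $U(R,n)$; this is standard for brushlets built from a smooth bell (see Appendix~\ref{app:brush}) and is inherited by the convolution $\varphi_\ell * w_{n,R}$ since $\varphi_\ell\in\cS$ and $|\ell-j|\le M$ (using~\eqref{brac_esti} to absorb a translate). Choosing $N>\nu+\nu/\theta$, and noting that $x\in U(R,n)$ forces $\brac{2^{j\va}(x-y_{R,n})}\lesssim 1$, the resulting integral converges by~\eqref{brac_int} and yields~\eqref{eq:pt_plan}. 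This is the main obstacle of the proof; everything else is routine.

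Finally, multiplying~\eqref{eq:pt_plan} by $\widetilde{\mathds{1}}_{U(R,n)}(x)=|R|^{1/2}\mathds{1}_{U(R,n)}(x)$ and using $|R|^{s/\nu}\asymp 2^{\ell s}$ for $|\ell-j|\le M$ produces the pointwise bound
$$|R|^{s/\nu}|\langle f,w_{n,R}\rangle|\widetilde{\mathds{1}}_{U(R,n)}(x) \lesssim \sum_{|\ell-j|\le M}2^{\ell s}\cM_\theta(\psi_\ell * f)(x)\mathds{1}_{U(R,n)}(x).$$
Taking $\ell_q$-sums over $(R,n)$ and using (i) the bounded overlap of $\{U(R_{j,k},n):n\in\bN_0^d\}$ at each scale $j$ (independent of $k$), (ii) the finiteness of $|E|$, and (iii) the finite range $|\ell-j|\le M$, one obtains $\cS_q^s(f)(x)\lesssim \big(\sum_\ell(2^{\ell s}\cM_\theta(\psi_\ell * f)(x))^q\big)^{1/q}$. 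Taking $\|\cdot\|_{\vp}$, applying~\eqref{eq:fs} (valid since $\theta<\min(p_1,\ldots,p_d,q)$), and invoking the equivalence of the Triebel--Lizorkin norms defined with the admissible $\varphi$ or $\psi$, we conclude $\|\cS_q^s(f)\|_{\vp}\lesssim \|f\|_{\dot F^s_{\vp,q}(\va)}$.
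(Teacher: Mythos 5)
Your strategy---Calder\'on reproducing formula, Peetre-type maximal estimate \eqref{M3}, then Fefferman--Stein \eqref{eq:fs}---is close in spirit to the paper's proof, which instead bounds $\langle f,w_{n,R}\rangle$ directly via the explicit hump decomposition \eqref{eq:G}, writes $f*H_R=\sum_{i\in F_R}\phi_i*\psi_i*f*H_R$, and invokes Proposition~\ref{th:vecmult} twice (itself a consequence of Peetre plus Fefferman--Stein). Either route can work, but there is a concrete gap in your key step~\eqref{eq:pt_plan}.

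The decay estimate you assert for the brushlet,
$|w_{n,R}(y)|\le C_N|R|^{1/2}\brac{2^{j\va}(y-y_{R,n})}^{-N}$
with $y_{R,n}$ the center of $U(R,n)$, is false: each univariate brushlet $w_{n_i,I_i}$ consists of \emph{two} humps at $\pm e_{n_i,I_i}$ (see \eqref{eq:gw}), so the tensor product $w_{n,R}$ has $2^d$ humps at the points $U_me_{n,R}$, $m=1,\dots,2^d$ (cf.\ \eqref{eq:G} and \eqref{eq:west}). Only one of these, the one with all positive signs, is close to $U(R,n)$. For the other $2^d-1$ humps the quantity $\brac{2^{j\va}(x-U_me_{n,R})}$ is large when $x\in U(R,n)$, so the Peetre bound centered at $x$ does not control the corresponding contributions to $\langle\psi_\ell*f,\varphi_\ell*w_{n,R}\rangle$ by $\cM_\theta(\psi_\ell*f)(x)$. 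The correct pointwise bound reads
$$|\langle\psi_\ell*f,\varphi_\ell*w_{n,R}\rangle|\lesssim |R|^{-1/2}\sum_{m=1}^{2^d}\cM_\theta(\psi_\ell*f)(U_mx),\qquad x\in U(R,n),$$
obtained by applying \eqref{M3} at the reflected point $U_mx$ for the $m$-th hump and noting $\brac{2^{j\va}U_m(x-e_{n,R})}=\brac{2^{j\va}(x-e_{n,R})}\lesssim1$. This is precisely what the paper's argument does via the $U_m$ reflections. Your conclusion then survives because the $L_{\vp}(\ell_q)$ norm is invariant under the coordinate reflections $U_m$, but as written \eqref{eq:pt_plan} is wrong and the derivation skips the step that makes the extra humps harmless. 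Fix this by keeping all $2^d$ terms, tracking the $U_m$ arguments in the maximal functions, and performing the substitution $x\mapsto U_mx$ at the very end before applying \eqref{eq:fs}; with that repair the proof goes through.
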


\begin{proof}
Take $f\in  \dot{F}^{s}_{p,q}(\R^d)\cap \cS_\infty(\bR^d)$ and consider   $R\in \mathcal{R}_{j}$,  for some $j\in \bZ$, i.e.,  $|R|\asymp 2^{j\nu}$ (uniformly in $j$). We write the cosine term in Eq.\ \eqref{eq:brush1} as a sum of complex exponentials, and we take a tensor product to create $w_{\cn,R}$ . This process creates a multivariate function with $2^d$ "humps", and, as it turns out, we will consequently need $2^d$ terms to control the inner product $\langle
f,w_{\cn,R}\rangle$. By \eqref{eq:G},
\begin{align*}|\langle
f,w_{\cn,R}\rangle|&\leq C 2^{-d}|R|^{1/2}\sum_{m=1}^{2^d} |\langle f,G_R(\Delta(x+U_m e_{n,R}))\rangle|\\
&=C 2^{-d}|R|^{1/2}\sum_{m=1}^{2^d} |f*{H_R}(U_m e_{n,R})\rangle |,
\end{align*}
where ${H_R}(\cdot):=G_R(-\Delta\, \cdot)$. Now, notice for $x\in U(R,n)$ we have 
 $\brac{2^{j\vec{a}}(e_{n,R}-x)}\leq c$. Hence, for $0<t<\min(p_1,\dots,p_d,q),$
\begin{align*}
\sum_{m\in \bN_0^d} (|R|^{s/\nu}|\langle
f,w_{\cn,R}\rangle|&\widetilde{\mathds{1}}_{U(R,m)}(\cdot))^q\\
&\leq \sum_{m=1}^{2^d}\sum_{n\in \bN_0^d}|R|^{sq/\nu} |f*{H_R}(U_m e_{n,R})\rangle |^q\mathds{1}_{U(R,n)}(x)
\nonumber
\\
&\le c\sum_{m=1}^{2^d}\sum_{n\in \bN_0^d}\frac{2^{sqj}|f*H_R(U_me_{n,R})|^q}{\brac{2^{j\vec{a}}(e_{n,R}-x)}^{\nu q/t}}\mathds{1}_{U(R,n)}(x)
\nonumber
\\
&\le c\sum_{m=1}^{2^d}\sum_{n\in \bN_0^d}\Big(\sup_{y\in U(R,n)}\frac{2^{j s}|f*H_R(U_my)|}{\brac{2^{j\vec{a}}(y-x)}^{\nu/t}}\Big)^q\mathds{1}_{U(R,n)}(x).
\label{Thmainp6}
\end{align*}
We now apply  the maximal inequality (\ref{M3}), where we notice by the construction of $H_R$, $\widehat{H_R}\subseteq 2^{j\vec{a}}[-2,2]^d$, so consequently  $\text{supp} [((f*H_R)\circ U_m))^\wedge]\subseteq 2^{j\vec{a}}[-2,2]^d$. We obtain
\begin{equation}\label{Thmainp7}
\sum_{n\in \bN_0^d} (|R|^{s/\nu}|\langle
f,w_{\cn,R}\rangle|\widetilde{\mathds{1}}_{U(R,n)}(\cdot))^q \le c
\sum_{m=1}^{2^d} \big(2^{j s}\cM_\theta\big(f*H_R)\big)(U_mx)\big)^q,
\end{equation}
for every $x\in\mathbb{R}^d.$ 
We now pass to the discrete Triebel-Lizorkin norm of $S_{\varphi}f$, 
where we notice that, due to the frequency localization of $H_R$ around $R$,
\begin{equation}
    \label{eq:HR}
f*H_R=\sum_{i\in F_R} \phi_i*\psi_i*f*H_R,
\end{equation}
for $F_R:=\{i: T_i\cap \text{supp}(\widehat{H_R})\not=\emptyset\}\subset j+\{-M,-M+1,\ldots,M-1,M\}$, where the dyadic structure of $\cup_j\mathcal{P}_j$, and the support sets $\{T_j\}$, ensure that $M$ can be chosen independent of $R$. 

One can also verify, using the specific structured compact support in frequency, that
both systems $\{\psi_j\}_i$ and $\{H_R\}_{R\in\cR}$ satisfy the conditions stated in  Proposition \ref{th:vecmult} for any $m>\frac
  \nu2+\frac{\nu}{\min(p_1,\ldots,p_d,q)}$. 
Hence, by substituting \eqref{eq:HR} in (\ref{Thmainp7}), and using Proposition \ref{th:vecmult} twice, we obtain

\begin{eqnarray*}\|\mathcal{S}_q^s(f)\|_{\vp}
&\le&
c\sum_{m=1}^{2^d}\Big\|\Big(\sum_{R \in \mathcal{R}}\big(2^{js}\big(\sum_{i\in F_R}\phi_i*\psi_i*f*H_R\big)(U_m\cdot)\big)^q \Big)^{1/q}\Big \|_{\vec{p}}\nonumber\\
&\le&
c\sum_{m=1}^{2^d}\Big\|\Big(\sum_{R \in \mathcal{R}}\sum_{i\in F_R}\big(2^{js}\big(\phi_i*\psi_i*f*H_R\big)(\cdot)\big)^q \Big)^{1/q}\Big \|_{\vec{p}}\nonumber\\
&\le&
c\sum_{m=1}^{2^d}\Big\|\Big(\sum_{R \in \mathcal{R}}\sum_{i\in F_R}\big(2^{js}\big(\phi_i*\psi_i*f\big)(\cdot)\big)^q \Big)^{1/q}\Big \|_{\vec{p}}\nonumber\\
&\le&c\Big\|\Big(\sum_{j\in \mathbb{Z}}(2^{sj}|{\varphi}_{j}*f|)^q \Big)^{1/q}\|_{\vec{p}}\\&\le& c\|f\|_{\dot{F}^{s}_{\vec{p}q}(\vec{a})},
\nonumber
\end{eqnarray*}
and the general result then follows from a limiting argument using that $\cS_\infty(\bR^d)$ is dense in $\dot{F}^{s}_{\vec{p}q}(\vec{a})$.\ 
\end{proof}

\begin{myre}
Clearly, the quantity $\|\mathcal{S}_q^s(f)\|_{\vp}$ is closely related to 
the $\dot{f}_{\vp q}^s$-norm given in \eqref{dTLnorm}, but slightly different domains for the sequences are used in the two cases due the structure imposed by the Lizorkin-decomposition. Rather than applying a potentially complicated re-indexing, we will slightly abuse notation below and (re-)define 
\begin{equation}\label{eq:redef}
\|\{c_{n,R}\}_{n,R}\|_{\dot{f}_{\vp q}^s(\va)}:=\bigg\|\Big(\sum_{R \in \mathcal{R}} \sum_{n\in \bN_0^d} (|R|^{s/\nu}|c_{n,R}|\widetilde{\mathds{1}}_{U(R,n)}(\cdot))^q \Big)^{1/q}\bigg\|_{\vp},
    \end{equation}
and
\begin{equation}\label{eq:redBnorm}
\|\{c_{n,R}\}_{n,R}\|_{\dot{b}^s_{\vec{p}q}(\vec{a})}:=\Big(\sum_{R \in \cR} \Big\|\sum_{n\in \bN_0^d} |R|^{s/\nu}|c_{n,R}||\widetilde{\mathds{1}}_{U(R,n)}(\cdot))\Big\|_{\vec{p}}^q \Big)^{1/q},
\end{equation}
for sequences $\{c_{n,R}\}_{n,R}$ with index domain $\bN^d_0\times\cR$.
\end{myre}

With the adjusted notation, Proposition \ref{prop:normchar} can be rephrased more naturally proving that the canonical analysis operator $S_{\cW}$, defined for $f\in\cS'/\mathcal{P}$  by
\begin{equation*}
S_{\cW} f=\{(S_{\cW} f)_{n,R}\}_{n,R}, \ \text{with} \ (S_{\cW} f)_{n,R}= \langle f, w_{n,R} \rangle, \ \text{for } (n,R)\in \bN_0^d\times\cR,
\end{equation*}
is bounded on  $\dot{F}_{\vp q}^s(\va)$,
\begin{equation}\label{eq:SW}
S_{\cW}:\dot{F}_{\vp q}^s(\va)\longrightarrow \dot{f}_{\vp q}^s(\va),
\end{equation}
with a similar result for $\dot{B}_{\vp q}^s(\va)$ and $\dot{b}_{\vp q}^s(\va)$.

We now turn to a companion result to Proposition \ref{prop:normchar} and \eqref{eq:SW} that will imply that the canonical reconstruction operator $T_{\cW}$ for $\{w_{n,R}\}$, defined for (finite) sequences $c=\{c_{n,R}\}$ by $$T_{\cW}c:=\sum_{n,R} c_{n,R} w_{n,R},$$ is bounded on $\dot{f}_{\vp q}^s(\va)$ and $\dot{b}_{\vp q}^s(\va)$, respectively, using the interpretation specified in  \eqref{eq:redef}.
\begin{myprop}\label{lem:reconstruct}
  Suppose  $s\in \bR$, $\vp\in (0,\infty)^d$ and let $0<q< \infty$. Then for
  any finite sequence
  $s=\{s_{\cn,R}\}_{n,R}$, we have 
  $$\big\|T_{\cW}s\big\|_{\dot{F}_{p,q}^{s}(\va)} \leq C
  \|\{s\}\|_{\dot{f}^{s}_{p,q}(\va)}.$$
 A similar estimate holds for $\dot{B}_{p,q}^{s}(\va)$ and $\dot{b}_{p,q}^{s}(\va)$.
\end{myprop}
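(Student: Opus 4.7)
The plan is to prove the synthesis bound by analyzing $\varphi_j * T_{\cW}s$ at each dyadic scale $j$, exploiting the almost-orthogonality inherent in the Lizorkin construction and the pointwise decay of the brushlet elements. Since $s$ is a finite sequence, $T_{\cW}s$ is a finite sum of elements of $\cS_\infty(\bR^d)$ and all manipulations below are legitimate. First I fix $j$ and write $\varphi_j * T_{\cW}s = \sum_{R,n} s_{n,R}(\varphi_j * w_{n,R})$. Since $\supp\widehat{w_{n,R}}$ sits in a rectangle equivalent to $R$ and $\supp\widehat{\varphi_j}\subset T_j$, there exists $M\in\bN$ independent of $j$, $R$, $n$ such that the convolution vanishes unless $R\in\cR_i$ with $|i-j|\leq M$.

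Next I would establish a pointwise Schwartz-type decay estimate: for any $\tau>0$ and $R\in\cR_i$ with $|i-j|\leq M$,
\[|\varphi_j * w_{n,R}(x)|\leq C_\tau\, |R|^{1/2}\,\brac{2^{i\va}(x-e_{n,R})}^{-\tau},\]
where $e_{n,R}$ is the physical-space peak of $w_{n,R}$, essentially the center of $U(R,n)$. This bound follows from the smoothness of the brushlet bell functions together with the rapid decay of $\varphi_j$, via a standard integration-by-parts argument in the frequency variable. With this in hand I would run a Peetre-type argument, mirroring the one used in the proof of Proposition \ref{prop:normchar}: for $\theta<\min(p_1,\dots,p_d,q)$ with $\tau\theta>\nu$, the bounded-overlap tiling property of $\{U(R,n)\}_n$ combined with \eqref{rectangle} yields
\[\sum_n|s_{n,R}|\brac{2^{i\va}(x-e_{n,R})}^{-\tau}\lesssim |R|^{-1/2}\cM_\theta(g_R)(x),\qquad g_R:=\sum_n|s_{n,R}|\widetilde{\mathds{1}}_{U(R,n)}.\]
Absorbing the bounded factor $2^{s(j-i)}$ for $|i-j|\leq M$ into the constant then yields the pointwise estimate
\[2^{sj}|\varphi_j * T_{\cW}s(x)|\lesssim \sum_{R:\ |I(R)-j|\leq M}\cM_\theta(|R|^{s/\nu}g_R)(x),\]
where $I(R)$ denotes the scale index with $R\in\cR_{I(R)}$.

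The passage to the $\dot{F}^{s}_{\vp q}(\va)$-norm is now routine. Taking $\ell^q$ in $j$ and using that each $R$ appears in at most $2M+1$ values of $j$, one deduces
\[\Big(\sum_j(2^{sj}|\varphi_j * T_{\cW}s|)^q\Big)^{1/q}\lesssim \Big(\sum_R\cM_\theta(|R|^{s/\nu}g_R)^q\Big)^{1/q}.\]
The Fefferman-Stein inequality \eqref{eq:fs}, applicable since $\theta<\min(p_1,\dots,p_d,q)$, then gives $\|T_{\cW}s\|_{\dot{F}^{s}_{\vp q}(\va)}\lesssim \|(\sum_R(|R|^{s/\nu}g_R)^q)^{1/q}\|_{\vp}$, and this last quantity is comparable to $\|s\|_{\dot{f}^{s}_{\vp q}(\va)}$ by the bounded overlap of $\{U(R,n)\}_n$ at each scale. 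The Besov case follows the same template after swapping the order of $L_{\vp}$ and $\ell^q$; only the boundedness of $\cM_\theta$ on $L_{\vp}$ is needed in that setting, not the full vector-valued maximal inequality.

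The main technical obstacle will be the pointwise decay estimate on $\varphi_j * w_{n,R}$ uniform in $n$, since the physical-space peaks depend on $n$ and on the specific octant among the $4^d-2^d$ components of $\cR_j$ in which $R$ lies; the $2^d$-humps structure visible in the proof of Proposition \ref{prop:normchar} reappears here and must be tracked carefully when locating $e_{n,R}$. A secondary care point is ensuring that $\theta$ can be chosen small enough that the Peetre estimate, the tiling comparison, and the Fefferman-Stein bound all kick in simultaneously, which is exactly what forces the hypothesis $\theta<\min(p_1,\dots,p_d,q)$ throughout the argument.
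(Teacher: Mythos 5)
Your approach is correct in outline but takes a genuinely different route through one step. The paper first invokes the vector-valued multiplier theorem, Proposition \ref{th:vecmult}, to strip away the convolution with $\phi_j$ entirely, reducing the task to bounding $\sum_{R\in N_j}\sum_n s_{n,R}w_{n,R}$ with $\#N_j$ uniformly bounded; it then applies Lemma \ref{lem:maxbound}, a clean pointwise bound on $\sum_n|s_{n,R}||w_{n,R}(x)|$ by an iterated maximal function, and finishes with Fefferman--Stein. You instead keep $\phi_j$ in play and prove a Schwartz-type decay estimate for the \emph{convolution} $\varphi_j * w_{n,R}$ directly, which is more elementary in that it bypasses Proposition \ref{th:vecmult}, at the cost of having to re-derive the decay with the extra convolution factor (a routine but nonzero amount of work, given that $|\varphi_j|\lesssim 2^{j\nu}\brac{2^{j\va}\cdot}^{-\tau}$ and one is convolving functions on comparable scales $|i-j|\le M$). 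Both routes land on the same Peetre / Fefferman--Stein endgame, and the final step reconciling $\|(\sum_R(|R|^{s/\nu}g_R)^q)^{1/q}\|_{\vp}$ with $\|s\|_{\dot{f}^s_{\vp q}(\va)}$ via bounded overlap of $\{U(R,n)\}_n$ matches the paper's closing remark.

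The one point that needs to be fixed, and which you flag yourself, is that the stated pointwise bound
$|\varphi_j * w_{n,R}(x)|\leq C_\tau\, |R|^{1/2}\,\brac{2^{i\va}(x-e_{n,R})}^{-\tau}$
is false as written: the brushlet $w_{n,R}$ has $2^d$ humps at the points $U_m e_{n,R}$, $m=1,\dots,2^d$ (see Eq.\ \eqref{eq:G}), and the convolution inherits all of them. The correct bound has the form
$|\varphi_j * w_{n,R}(x)|\leq C_\tau\, |R|^{1/2}\sum_{m=1}^{2^d}\brac{2^{i\va}(U_m x-e_{n,R})}^{-\tau}$.
Since $2^d$ is a fixed constant and the maximal operator $\cM_\theta$ as well as the $L_{\vp}(\ell_q)$-norm are invariant under the sign-reflection substitutions $x\mapsto U_m x$ (compare the last line of the paper's proof of Lemma \ref{lem:maxbound}), this changes nothing qualitatively: every subsequent display acquires a finite sum over $m$ that is absorbed into the constant. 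So this is a correctable presentation error rather than a conceptual gap, but as stated the single-peak estimate would make the Peetre bound fail for $x$ near the other $2^d-1$ reflected peaks.
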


\begin{proof}
Assume that $\phi,\psi$ is an admissible pair as specified in Definition \ref{adm}. We will use the associated system  $\{\phi_j\}_{j\in \bZ}$ to calculate the norm in $\dot{F}_{p,q}^{s}(\va)$. Using the structure given by \eqref{eq:brush1}, and Proposition \ref{th:vecmult}, we get
  \begin{align*}
    \Bigl\|\sum_{n,R} s_{\cn,R}w_{n,R}\Bigr\|_{\dot{F}_{\vec{p},q}^{s}} &= \Bigl\|
    \Big\{2^{js} \phi_{j}*\Big(\sum_{n,R}
    s_{\cn,R}w_{n,R}\Big)\Big\}_{j}\Bigr\|_{L_{\vec{p}}(\ell_q)}\\
    &\leq C\Bigl\|\Big\{
    2^{js} \sum_{R \in N_j} \sum_{\cn}
    s_{\cn,R}w_{n,R}\Big\}_j\Bigr\|_{L_{\vec{p}}(\ell_q)},
\end{align*}
where $N_j= \{R'\in \mathcal{P}\colon T_j\cap \supp
(b_{R'})\neq \emptyset\}$. It follows easily  that $\# N_j$ is
uniformly bounded, and
we obtain 
\begin{multline}\Bigl\|
   \Big\{ 2^{js} \sum_{R \in N_j} \sum_{\cn}
    s_{n,R}w_{n,R}\Big\}_j
\Bigr\|_{L_{\vec{p}}(\ell_q)}\\ \leq
    C\biggl\|\biggl( \sum_{R\in N_j} \Bigl( 2^{js} \sum_n
    |s_{n,R}| |w_{n,R}|\Bigr)^q\biggr)^{1/q}\biggr\|_{\vec{p}}.
    \end{multline}
Fix $0<r<\min(p_1,\ldots,p_d,q)$. Then by Lemma \ref{lem:maxbound} below, and the
Fefferman-Stein maximal inequality \eqref{eq:fs}, we obtain
\begin{align*}
    \Bigl\|\Big\{
    2^{js} & \sum_{\cn} |s_{\cn,R}| |w_{n,R}|\Big\}_j
\Bigr\|_{L_{\vec{p}}(\ell_q)}\\
&\leq  \Bigl\|\Big\{
    2^{js} \sum_{m=1}^{2^d} \sum_{\cn} |s_{\cn,R}| |G_R(\Delta(x-U_m e_{n,R})|\Big\}_R
\Bigr\|_{L_{\vec{p}}(\ell_q)}\\
    &\leq C\Bigl\|\Big\{
    2^{js} |R|^{1/2} \sum_{m=1}^{2^d} \mathcal{M}_r\Bigl(\sum_{\cn}
  |s_{\cn,R}|
   \mathds{1}_{U(R,\cn)}\Bigr)(U_{m}\cdot)\Big\}_R
\Bigr\|_{L_{\vec{p}}(\ell_q)}\\
&\leq C'\Bigl\|\Big\{
    2^{js} |R|^{1/2} \sum_{\cn}
  |s_{\cn,R}| \mathds{1}_{U(R,n)}\Big\}_R\Bigr\|_{L_{\vec{p}}(\ell_q)},
  \end{align*}
  where we used the (quasi-)triangle inequality and straightforward substitutions in the integrals.
The result now follows since the sum over $\cn$ is locally finite with a
uniform bound on the number of non-zero terms, which implies that
$$\Big(\sum_{\cn}
  |s_{n,R}| \mathds{1}_{U(R,\cn)}\Big)^q\asymp \sum_{\cn}
  |s_{\cn,R}|^q \mathds{1}_{U(R,n)},$$
uniformly in $R$, where we also recall that $\widetilde{\mathds{1}}_{U(R,n)}:=|R|^{1/2}\mathds{1}_{U(R,n)}$.
\end{proof}
The following estimate was used in the proof of Proposition \ref{lem:reconstruct}.
\begin{mylemma}\label{lem:maxbound}
  Let $0<r\leq 1$. There exists a constant $C$, independent of $R\in\cR$,  such that for any finite sequence $\{s_{n,R}\}_{n,R}$ we have
  $$\sum_{\cn} |s_{n,R}||w_{n,R}(x)|\leq C|R|^{1/2} \sum_{m=1}^{2^d} \cM_r\Bigl(\sum_{\cn}
  |s_{n,R}| \mathds{1}_{U(R,\cn)}\Bigr)(U_m x),\quad x\in\bR^d,$$
  with $U_m$ defined in Eq.\ \eqref{eq:um}.
\end{mylemma}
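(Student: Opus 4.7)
The plan is to reduce the lemma to a standard weighted-sum-to-maximal-function estimate via two steps: (i) decompose each tensor-product brushlet into $2^d$ envelope humps, using the expansion $2\cos\theta=e^{i\theta}+e^{-i\theta}$ coordinatewise, exactly as in the proof of Proposition~\ref{prop:normchar}; (ii) convert the anisotropic Schwartz decay of the envelope into $\cM_r$ via dyadic annular splitting.

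In detail, for $R=R_{j,k}\in \cR_j$, applying the same device that produced the estimate for $|\langle f,w_{\cn,R}\rangle|$ in Proposition~\ref{prop:normchar} pointwise to $w_{\cn,R}$ itself would yield
$$|w_{\cn,R}(x)|\le C|R|^{1/2}\sum_{m=1}^{2^d}|G_R(\Delta(x-U_m e_{\cn,R}))|,$$
where $G_R$ is the smooth frequency-localised envelope from Appendix~\ref{app:brush} and $e_{\cn,R}:=\pi 2^{-j\va}(\cn+\ca)$ is the centre of $U(R,\cn)$. By Schwartz decay, $|G_R(\Delta z)|\le C_\tau\brac{2^{j\va}z}^{-\tau}$ for any $\tau>0$, uniformly in $R\in\cR_j$. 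Because $U_m$ is a coordinate sign-flip, it commutes with $2^{j\va}$ and leaves $|\cdot|_{\vec{a}}$ and $\brac{\cdot}$ invariant, so $\brac{2^{j\va}(x-U_m e_{\cn,R})}=\brac{2^{j\va}(U_m x-e_{\cn,R})}$; the problem reduces to showing, for each $m$,
$$T(y):=\sum_\cn |s_{\cn,R}|\brac{2^{j\va}(y-e_{\cn,R})}^{-\tau}\le C\,\cM_r(F)(y),\qquad F:=\sum_\cn |s_{\cn,R}|\mathds{1}_{U(R,\cn)},$$
with $y=U_m x$.

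For this, since $2^{j\va}(z-e_{\cn,R})\in[-1,1]^d$ for $z\in U(R,\cn)$, the sub-multiplicativity \eqref{brac_esti} gives $\brac{2^{j\va}(y-e_{\cn,R})}\asymp \brac{2^{j\va}(y-z)}$ uniformly in $z\in U(R,\cn)$. Combined with $|U(R,\cn)|\asymp |R|^{-1}$, the subadditivity of $t\mapsto t^r$ for $0<r\le 1$, and the uniform overlap bound $\sum_\cn\mathds{1}_{U(R,\cn)}\le L$ from just before Proposition~\ref{prop:normchar}, one obtains
$$T(y)^r\lesssim |R|\int_{\bR^d}F(z)^r\brac{2^{j\va}(y-z)}^{-\tau r}\,dz.$$
Splitting this integral over the dyadic anisotropic annuli $A_k:=\{z:2^{k-1}\le\brac{2^{j\va}(y-z)}<2^k\}$ ($k\ge 0$), each contained in an axis-aligned rectangle of volume $\lesssim |R|^{-1}2^{k\nu}$, the iterated rectangle-average defining $\cM_r$ bounds $\int_{A_k}F^r\lesssim |R|^{-1}2^{k\nu}\cM_r(F)(y)^r$. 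For $\tau$ large enough that $\tau r>\nu$ the geometric series in $k$ converges, yielding $T(y)^r\lesssim \cM_r(F)(y)^r$. Summing the $2^d$ resulting contributions in $m$ closes the proof.

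The main obstacle I anticipate is the first step: carefully unwinding the tensor-product brushlet construction of Appendix~\ref{app:brush} into the pointwise $2^d$-hump decomposition with constants and Schwartz-decay bounds on $G_R$ uniform in $R\in\cR$. Everything after that is a classical weighted-sum-to-maximal-function argument adapted to the anisotropic setting through the iterated maximal operator $\cM_r$ of \eqref{Max1}.
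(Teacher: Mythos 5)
Your proposal is correct and follows essentially the same route as the paper's own argument: reduce to the $2^d$-hump envelope bound from Appendix~\ref{app:brush} (the paper cites \eqref{eq:west}, which is exactly your first displayed inequality combined with the anisotropic Schwartz decay \eqref{eq:ngdecay}), use $r$-subadditivity together with the overlap bound $\sum_n\mathds{1}_{U(R,n)}\le L$, split into anisotropic dyadic annuli, bound each annulus contribution via the rectangle-average estimate \eqref{rectangle}, sum a geometric series, and handle the $2^d$ terms by the substitutions $x\mapsto U_m x$. The only cosmetic difference is that the paper groups the discrete indices $n$ into lattice annuli (after reducing WLOG to $x\in U(R,0)$), whereas you first pass from the sum to an integral in $z$ and then decompose into continuous annuli about $y$; both are the same dyadic maximal-function argument.
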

The proof of Lemma \ref{lem:maxbound} can be found in Appendix \ref{s:app}.
We now use Proposition \ref{prop:normchar} and Proposition
\ref{lem:reconstruct} to obtain the main result of this paper, that $\{w_{\cn,R}\}$ is an orthonormal basis that universally 
captures the norm of $\dot{B}^{s}_{\vp, q}(\va)$ and $\dot{F}^{s}_{\vp, q}(\va)$, respectively. Moreover, the system forms an unconditional basis for  $\dot{B}^{s}_{\vp, q}(\va)$ and for $\dot{F}^{s}_{\vp, q}(\va)$ in the Banach space case. 

\begin{mytheorem}\label{prop:modu}
Let $s\in \bR$, $\vp\in (0,\infty)^d$, $0<q< \infty$. The maps $S_{\cW}$ and $T_{\cW}$ provide isomorphisms between 
$\dot{F}^{s}_{\vp,q}(\va)$ and $\dot{f}^{s}_{\vp,q}(\va)$, with $T_{\cW}\circ S_{\cW}=Id_{\dot{F}^{s}_{\vp,q}(\va)}$. In particular, we have  the norm characterization 
$$ \|f\|_{\dot{F}^{s}_{\vp,q}(\va)}\asymp  \|S_{\cW}\|_{\dot{f}^{s}_{\vp,q}(\va)},\qquad f\in \dot{F}^{s}_{\vp,q}(\va),$$
with $\dot{f}^{s}_{\vp,q}(\va)$ defined in \eqref{eq:redBnorm}.
Moreover, $\{w_{\cn,R}\}_{n\in\bN_0^d,R\in\cR}$ forms an unconditional basis for $\dot{F}^{s}_{\vp,q}(\va)$.

Similar characterisation
results holds for the Besov spaces $\dot{B}^{s}_{\vp,q}(\va)$ with associated sequence space $\dot{b}^{s}_{\vp,q}(\va)$.
\end{mytheorem}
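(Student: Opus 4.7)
The plan is to assemble the theorem from Propositions \ref{prop:normchar} and \ref{lem:reconstruct} together with the $L_2$-orthonormality of Proposition \ref{prop:onb}, in four short steps. First, I extend $T_{\cW}$ from finite sequences to all of $\dot{f}^{s}_{\vp,q}(\va)$. Since $q<\infty$, finite sequences are dense in $\dot{f}^{s}_{\vp,q}(\va)$ (by an elementary (quasi-)dominated convergence argument over the countable index set $\bN_0^d\times\cR$), so Proposition \ref{lem:reconstruct} extends uniquely to a bounded operator $\dot{f}^{s}_{\vp,q}(\va)\to \dot{F}^{s}_{\vp,q}(\va)$; the extended value is identified in $\cS'/\cP$ with the limit of the truncated reconstructions.

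Next, I verify $S_{\cW}\circ T_{\cW}=Id$ on $\dot{f}^{s}_{\vp,q}(\va)$. For a finitely-supported $c$, the function $T_{\cW}c$ is a finite $L_2$-combination of the $w_{n,R}$, and Proposition \ref{prop:onb} yields $\langle T_{\cW}c,w_{n',R'}\rangle = c_{n',R'}$ directly. The identity $S_{\cW}\circ T_{\cW}=Id$ then extends to all of $\dot{f}^{s}_{\vp,q}(\va)$ by continuity of $S_{\cW}$ (Proposition \ref{prop:normchar}) and of the extended $T_{\cW}$.

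For the reverse identity $T_{\cW}\circ S_{\cW}=Id$ on $\dot{F}^{s}_{\vp,q}(\va)$, I use density of $\cS_\infty(\bR^d)$ in $\dot{F}^{s}_{\vp,q}(\va)$. This density is a consequence of Theorem \ref{th:main}: finite truncations of the $\phi$-transform reconstruction of $f$ approximate $f$ in $\dot{F}^{s}_{\vp,q}(\va)$ and lie in $\cS_\infty$, since $\psi_j\in\cS_\infty$ (as $\hat{\psi}$ vanishes near the origin). For $f\in\cS_\infty\subset L_2(\bR^d)$, Proposition \ref{prop:onb} yields $f=\sum_{n,R}\langle f,w_{n,R}\rangle w_{n,R}$ in $L_2$, and hence in $\cS'/\cP$. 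On the other hand, $T_{\cW}(S_{\cW}f)$ is by construction the $\dot{F}^{s}_{\vp,q}(\va)$-limit (and thus the $\cS'/\cP$-limit) of the same partial sums. Identifying these two limits in the common space $\cS'/\cP$ gives $T_{\cW}S_{\cW}f=f$ on the dense subspace $\cS_\infty$, and continuity extends this to all of $\dot{F}^{s}_{\vp,q}(\va)$. Combined with Step 2 this yields the claimed isomorphism and the norm characterization.

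Finally, unconditional basis follows from the observation that both \eqref{eq:redef} and \eqref{eq:redBnorm} define absolute norms: they depend only on the moduli $|c_{n,R}|$. For any choice of signs $\epsilon_{n,R}\in\{\pm 1\}$ and any bijection $\pi$ of $\bN_0^d\times\cR$, the reindexing $c\mapsto\{\epsilon_{n,R}c_{\pi(n,R)}\}$ is an isometry of $\dot{f}^{s}_{\vp,q}(\va)$, and transferring it through the isomorphism $(S_{\cW},T_{\cW})$ gives unconditional convergence of $\sum_{n,R}\langle f,w_{n,R}\rangle w_{n,R}$ in $\dot{F}^{s}_{\vp,q}(\va)$. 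The Besov statement is obtained in exactly the same way, using \eqref{eq:redBnorm} in place of \eqref{eq:redef} and the Besov part of Propositions \ref{prop:normchar} and \ref{lem:reconstruct}. I expect Step 3 to be the main obstacle: one must reconcile the $L_2$- and $\dot{F}^{s}_{\vp,q}(\va)$-limits of the orthonormal expansion, and it is the shared embedding of both spaces into $\cS'/\cP$, together with the agreement of truncations in both topologies, that makes the reconciliation possible.
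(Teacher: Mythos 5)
Your proof is correct and follows the same basic strategy as the paper: combine Propositions \ref{prop:normchar} and \ref{lem:reconstruct} to get two-sided boundedness, then use the orthonormality from Proposition \ref{prop:onb} and density arguments to establish $T_{\cW}\circ S_{\cW}=Id$ and $S_{\cW}\circ T_{\cW}=Id$, and finally deduce unconditionality. The paper's own proof is essentially a one-line "follows at once," so your four-step argument is precisely the kind of detail the author left to the reader.

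Two points worth flagging. First, your Step 3 — reconciling the $L_2$-limit of the orthonormal expansion with the $\dot{F}^{s}_{\vp,q}(\va)$-limit through the shared continuous embedding into $\cS'/\cP$ — is the real content of $T_{\cW}\circ S_{\cW}=Id$, and you are right that it is the delicate point; the paper does not make this step explicit. Second, your unconditionality argument (absolute sequence (quasi-)norms are invariant under sign changes and permutations, then transfer through the isomorphism) is actually slightly cleaner than the paper's, which invokes the fact that "$\dot{F}^{s}_{\vp,q}(\va)$ is a Banach space" and restricts to $1\leq p_i,q<\infty$, even though the theorem is stated for the full quasi-Banach range $\vp\in(0,\infty)^d$, $0<q<\infty$. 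Your approach handles that range uniformly and thus fixes a small mismatch between the paper's theorem statement and its proof sketch.
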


\begin{proof}
  The norm characterization follows at once by combining Proposition \ref{prop:normchar} and Lemma \ref{lem:reconstruct}. The claim that the system forms an unconditional
basis when $1\leq p, q < \infty$ follows easily from the fact that  $\dot{F}^{s}_{\vp,q}(\va)$ is a Banach space, and
that finite expansions in  $\{w_{\cn,R}\}$ have uniquely determined coefficients giving us a norm characterization of such expansions by the $L_{\vp}$-norm of  $\mathcal{S}_q^{s}(\cdot)$.

The proof in the  Besov  space case $\dot{B}^{s}_{\vp,q}(\va)$ is similar, but somewhat more simple. We leave the details to the reader.
\end{proof}

We  mention that for a lattice preserving dilation, one can construct $r$-regular wavelets forming orthonormal bases for $L_2(\bR^d)$, see \cite{bownik_construction_2001,calogero_wavelets_1999}.  Theorem \ref{prop:modu} can easily be extended to this case with a simplified proof.

The norm characterisation obtained in Theorem \ref{prop:modu} may appear similar to the characterisation obtained for tight frames in \cite{Cleanthous:2017,Cleanthous2019}, but one should notice the important additional  fact that $\{w_{\cn,R}\}$ forms an unconditional basis. This fact has significant implications for, e.g., $m$-term nonlinear approximation using the system $\{w_{\cn,R}\}$, where the linear independence will allow one to prove inverse estimates of Bernstein type. Inverse estimates are currently out of reach for the redundant $\phi$-transform frames considered in \cite{Cleanthous:2017,Cleanthous2019}, see a discussion of this longstanding open problem in \cite{MR1794807}.
\section{Nonlinear {$m$}-term approximation}\label{sec:nl}
We now consider $m$-term nonlinear approximation with the system $\cW$ in $\dot{F}^{\beta}_{\vp,q}(\va)$. Let us first introduce some needed notation. 

We use the notation $V
\hookrightarrow W$ to indicate $V$ is continuously embedded in $W$ for two (quasi)normed spaces $V$ and
$W$, i.e.,  $V \subset W$ and there is a constant $C < \infty$ such
that $\|\cdot\|_W \leq C \|\cdot\|_V$.
 
 A dictionary $\cD=\{g_n\}_{n\in\bN}$ in $\dot{F}^{\beta}_{\vp,q}(\va)$, $\vec{p}\in (0,\infty)^d$, $0<q\leq \infty$, $\beta\geq 0$, is a countable collection of
quasi-normalized elements from $\dot{F}^{\beta}_{\vp,q}(\va)$ in the sense that  there exists $C\geq 1$ such that $$C^{-1}\leq \|g_n\|_{\dot{F}^{\beta}_{\vp,q}(\va)}\leq C,\qquad n\in\bN.$$ Given $\cD$, we consider the collection of
all possible $m$-term expansions with elements from $\cD$:
$$\Sigma_m(\cD):=\Big\{\sum_{i\in \Lambda} c_i g_i\: \Big|\: c_i\in\mathbb{C},
\#\Lambda\leq m\Big\}.$$ The error of the best $m$-term
approximation to an element $f\in\dot{F}^{s}_{\vp,q}(\va)$ is then
$$\sigma_m(f,\cD)_{\dot{F}^{s}_{\vp,q}(\va)}:=\inf_{f_m\in \Sigma_m(\cD)} \|f-f_m\|_{\dot{F}^{s}_{\vp,q}(\va)}.$$




The approximation space $\mathcal{A}^\alpha_q(\dot{F}^{s}_{\vp,q}(\va),\cD)$ essentially
consists of all functions that can be approximated at the rate
$O(m^{-\alpha})$. The parameter $q$ is for fine-tuning only.

\begin{mydef}
  The approximation space $\mathcal{A}^\alpha_q(\dot{F}^{\beta}_{\vp,q}(\va),\cD)$ is defined
  by
  \[|f|_{\mathcal{A}^\alpha_q(\dot{F}^{\beta}_{\vp,q}(\va),\cD)}:=\bigg(\sum_{m=1}^\infty
  \big(m^\alpha \sigma_m(f,\cD)_{\dot{F}^{\beta}_{\vp,q}(\va)}\big)^q
  \frac{1}{m}\bigg)^{1/q}<\infty,\]
  and (quasi)normed by
  $\|f\|_{\mathcal{A}^\alpha_q(\dot{F}^{\beta}_{\vp,q}(\va),\cD)} = \|f\|_{\dot{F}^{\beta}_{\vp,q}(\va)} +
  |f|_{\mathcal{A}^\alpha_q(\dot{F}^{\beta}_{\vp,q}(\va),\cD)}$, for $0<q,\alpha<\infty$. When
  $q=\infty$, the $\ell_q$ norm is replaced by the sup-norm.
\end{mydef}

 Now a fundamental question is whether it is possible to completely characterize $\mathcal{A}^\alpha_q(\dot{F}^{\beta}_{\vp,q}(\va),\cD)$ in terms of known smoothness spaces. Often this problem is addressed by studying Jackson and Bernstein estimates for the dictionary $\cD$. Suppose $Y\hookrightarrow \dot{F}^{\beta}_{\vp,q}(\va) $ is a quasi-normed linear subspace.
Then a Jackson inequality with exponent $r_J>0$ is a so-called direct estimate of the type
$$\sigma_m(f,\cD)_{\dot{F}^{s}_{\vp,q}(\va)}\leq C m^{-r_J}\|f\|_Y,\qquad m\in\bN, f\in Y,$$
 while a Bernstein estimate with exponent $r_B>0$ is an inverse estimate of the type
 $$\|S\|_Y\leq Cm^{r_B} \|S\|_{\dot{F}^{s}_{\vp,q}(\va)},\qquad  S\in \Sigma_m(\cD).$$
 Whenever it is possible to find an appropriate subspace $Y$ with corresponding Jackson and Bernstein estimates with matching exponents, one can call on general theory to obtain a characterisation of $\mathcal{A}^\alpha_q(\dot{F}^{\beta}_{\vp,q}(\va),\cD)$ in terms of certain interpolation spaces between $Y$ and $\dot{F}^{\beta}_{\vp,q}(\va),\cD)$, we refer to \cite[Chap.\ 7]{DeVore1993} for further details.
 
 A general challenge is to obtain good estimates of $\sigma_m(f,\cD)_{\dot{F}^{s}_{\vp,q}(\va)}$ as $m\rightarrow \infty$. Sometimes this can be accomplished by using a simple thresholding approach.  We will need some additional terminology to study this question further.

For $\Gamma\subset \bN$, put $\Sigma_\Gamma:=\sum_{j\in \Gamma} g_j$. We say that $\cD$ is {\em democratic} if there exists $c\geq 1$ such that for all finite sets $\Gamma,\Gamma'\subset \bN$ with $\#\Gamma=\#\Gamma'$,
$$c^{-1} \|\Sigma_\Gamma\|_{\dot{F}^{\beta}_{\vp,q}(\va)}\leq \|\Sigma_{\Gamma'}\|_{\dot{F}^{\beta}_{\vp,q}(\va)}\leq c  \|\Sigma_\Gamma\|_{\dot{F}^{\beta}_{\vp,q}(\va)}.$$

Let us add some more structure to make thresholding  well defined. Let us suppose $\cD\subset L_2(\bR^d)\cap \dot{F}^{\beta}_{\vp,q}(\va)$ is such that there exists an associated dual system $\{\tilde{g}_j\}\subset L_2(\bR^d)$ satisfying
$$\langle \tilde{g}_i,g_j\rangle = \delta_{i,j},\qquad i,j\in\bN.$$
A greedy algorithm of step $N$ is defined as a correspondence
$$\mathcal{G}_N(f):=\sum_{\ell=1}^N \langle f,\tilde{g}_{n_\ell}\rangle g_{n_\ell},$$
for any permutation $\{n_i\}_{i=1}^\infty$ of $\bN$ such that
$$\|\langle f,\tilde{g}_{n_1}\rangle g_1\|_{\dot{F}^{\beta}_{\vp,q}(\va)}\geq \|\langle f,\tilde{g}_{n_2}\rangle g_2\|_{\dot{F}^{\beta}_{\vp,q}(\va)}\geq \|\langle f,\tilde{g}_{n_3}\rangle g_3\|_{\dot{F}^{\beta}_{\vp,q}(\va)}\geq  \cdots$$
We say that $\cD$ is {\em greedy} provided there exists a constant $C\geq 1$ such that
$$\|f-\mathcal{G}_m(f)\|_{\dot{F}^{\beta}_{\vp,q}(\va)}\leq C\sigma_m(f,\cD)_{\dot{F}^{s}_{\vp,q}(\va)},\qquad m\in \bN,$$
for all $f\in \dot{F}^{\beta}_{\vp,q}(\va)$.

Let us turn our attention to the system $\cW$. We have the following first observation regarding the basis $\cW$ and greediness.

\begin{mylemma}\label{le:dem}
Let $\vp=(p_1,p_2,\ldots,p_d)\in (0,\infty)^d$, $0<q<\infty$ and $\beta\in\bR$. Let $\cW_{\vp}$ be a re-scaled quasi-normalized version of $\cW$ in  $\dot{F}^{\beta}_{\vp,q}(\va)$. Then $\cW_{\vp}$ is greedy in $\dot{F}^{\beta}_{\vp,q}(\va)$ if and only if $\cW_{\vp}$ is democratic in $\dot{F}^{\beta}_{\vp,q}(\va)$.
\end{mylemma}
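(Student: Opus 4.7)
The strategy is to invoke the classical Konyagin--Temlyakov characterisation of greedy bases, appropriately extended to the quasi-Banach setting: an unconditional basis is greedy if and only if it is democratic. Theorem \ref{prop:modu} already shows that $\cW$ forms an unconditional basis for $\dot{F}^{\beta}_{\vp,q}(\va)$, and this property is preserved under a quasi-normalising rescaling, so $\cW_{\vp}$ is unconditional. Hence the lemma reduces to verifying this Konyagin--Temlyakov type equivalence in $\dot{F}^{\beta}_{\vp,q}(\va)$.

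For the forward direction (greedy $\Rightarrow$ democratic), given disjoint finite index sets $\Gamma, \Gamma' \subset \bN$ with $\#\Gamma = \#\Gamma' = m$, fix $\varepsilon > 0$ and consider the test element
$$f := \Sigma_\Gamma + (1+\varepsilon)\Sigma_{\Gamma'}.$$
Because $\cW_{\vp}$ is quasi-normalised and biorthogonal, the greedy algorithm of step $m$ selects precisely the indices in $\Gamma'$, yielding $f - \mathcal{G}_m(f) = \Sigma_\Gamma$. Combining the greedy hypothesis with the trivial bound $\sigma_m(f) \leq \|(1+\varepsilon)\Sigma_{\Gamma'}\|_{\dot{F}^{\beta}_{\vp,q}(\va)}$ gives $\|\Sigma_\Gamma\|_{\dot{F}^{\beta}_{\vp,q}(\va)} \leq C(1+\varepsilon)\|\Sigma_{\Gamma'}\|_{\dot{F}^{\beta}_{\vp,q}(\va)}$; letting $\varepsilon \downarrow 0$ and using symmetry yields democracy. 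The general non-disjoint case reduces to this by using the unconditional basis property to split off the common contribution $\Sigma_{\Gamma \cap \Gamma'}$.

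For the reverse direction (democratic $\Rightarrow$ greedy), expand $f = \sum_i c_i g_i$ and let $\mathcal{G}_m(f) = \sum_{i \in \Lambda_m} c_i g_i$, where $\Lambda_m$ collects the $m$ indices with the largest $|c_i|\,\|g_i\|_{\dot{F}^{\beta}_{\vp,q}(\va)}$. For any near-best $m$-term approximation $S = \sum_{i \in B} d_i g_i$, decompose
$$f - \mathcal{G}_m(f) = \sum_{i \in B \setminus \Lambda_m} c_i g_i + \sum_{i \notin \Lambda_m \cup B} c_i g_i.$$
The second sum is controlled by $\|f - S\|_{\dot{F}^{\beta}_{\vp,q}(\va)}$ using unconditionality. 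For the first sum, each $|c_i|$ with $i \in B \setminus \Lambda_m$ is bounded by the minimal coefficient magnitude on $\Lambda_m \setminus B$, and democracy allows the sum to be replaced, up to a uniform constant, by an equal-cardinality sum on $\Lambda_m \setminus B$; the latter is again dominated by $\|f - S\|_{\dot{F}^{\beta}_{\vp,q}(\va)}$ by unconditionality. Assembling via the appropriate $\rho$-triangle inequality yields $\|f - \mathcal{G}_m(f)\|_{\dot{F}^{\beta}_{\vp,q}(\va)} \leq C\sigma_m(f)_{\dot{F}^{\beta}_{\vp,q}(\va)}$, i.e., the greedy property.

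The main technical obstacle is the quasi-Banach regime: when $\min(p_1,\ldots,p_d,q) < 1$, the ordinary triangle inequality must be replaced by a $\rho$-triangle inequality with $\rho = \min(p_1,\ldots,p_d,q,1)$, and the coefficient-level norm characterisation supplied by Theorem \ref{prop:modu} via the sequence space $\dot{f}^{\beta}_{\vp,q}(\va)$ must be used to justify the replacement of one coefficient set by another of equal cardinality inside the norm. These adaptations of the Konyagin--Temlyakov argument to quasi-Banach spaces are by now standard, requiring careful bookkeeping of constants but no new ideas beyond what unconditionality and democracy provide.
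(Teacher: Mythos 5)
Your proposal is correct and takes essentially the same approach as the paper: both reduce the lemma to the Konyagin--Temlyakov characterisation (extended to quasi-Banach spaces as in Garrig\'os--Hern\'andez) plus the unconditionality furnished by Theorem \ref{prop:modu}. The paper simply cites the Konyagin--Temlyakov result, whereas you additionally sketch its proof, but the key reduction is identical.
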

\begin{proof}
 A general result by Temlyakov and Konyagin, see \cite{Konyagin1999} and \cite{Garrigos2004}, yields that $\cW_{\vp}$ is greedy if and only if $\cW_{\vp}$ is democratic and forms an unconditional basis for $\dot{F}^{\beta}_{\vp,q}(\va)$. We have from Theorem \ref{prop:modu} that $\cW_{\vp}$ forms an unconditional basis for $\dot{F}^{\beta}_{\vp,q}(\va)$, so  $\cW_{\vp}$ is greedy if and only if it is democratic.
\end{proof}

\begin{myre}
Temlyakov and Konyagin \cite{Konyagin1999}   proved their result in a Banach space setting, but the reader can easily verify  that their arguments also work in a quasi-Banach setting as the one used in Lemma \ref{le:dem}. This generalisation has also been observed in    \cite{Garrigos2004}.
\end{myre}

We can now characterise exactly when  $\cW_{\vp}$ is democratic in   $\dot{F}^{\beta}_{\vp,q}(\va)$.

\begin{myprop}\label{prop:demo}
Let $\vp=(p_1,p_2,\ldots,p_d)\in (0,\infty)^d$, $d\geq 2$. Let $\cW_{\vp}$ be a re-scaled quasi-normalized version of $\cW$ in  $\dot{F}^{\beta}_{\vp,q}(\va)$. Then $\cW_{\vp}$ is democratic in $\dot{F}^{\beta}_{\vp,q}(\va)$ if and only if  $p_1=p_2=\cdots=p_d$.
\end{myprop}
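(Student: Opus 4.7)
The plan is to pass to the sequence space $\dot{f}^\beta_{\vp q}(\va)$ via Theorem \ref{prop:modu}. A direct calculation from \eqref{eq:redef} yields that for $R\in\cR_j$,
$$\|w_{n,R}\|_{\dot{F}^\beta_{\vp q}(\va)}\asymp |R|^{\beta/\nu+1/2}\|\mathds{1}_{U(R,n)}\|_{\vp}= 2^{j(\beta+\nu/2-\sum_{i}a_i/p_i)},$$
since $U(R,n)$ is an axis-parallel rectangle with side lengths $\asymp 2^{-ja_i}$ in direction $i$. Denoting the quasi-normalised system by $\tilde w_{n,R}$ and substituting in the sequence-space formulation, we have
$$\|\Sigma_\Gamma\|_{\dot{F}^\beta_{\vp q}(\va)}\asymp \Big\|\Big(\sum_{(n,R)\in\Gamma}\bigl(2^{j_R\sum_{i}a_i/p_i}\mathds{1}_{U(R,n)}\bigr)^q\Big)^{1/q}\Big\|_{\vp}.$$

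For the ``only if'' direction, assume without loss of generality that $p_1\neq p_2$. Fix $R_0\in\cR_0$ and, for each $M\in\bN$ and $i\in\{1,2\}$, put
$$\Gamma_i^M:=\{(m\,e_i,R_0):0\leq m<M\},$$
where $e_i$ denotes the $i$-th standard basis vector in $\bN_0^d$. Each $\Gamma_i^M$ has cardinality $M$, and by the translation identity $U(R,n')=U(R,n)+\pi 2^{-j\va}(n'-n)$ recorded after \eqref{eq:QD} the cells $\{U(R_0,m\,e_i)\}_{0\le m<M}$ are pairwise disjoint. A short iterated-integral calculation shows that their union has $L_{\vp}$-measure $\asymp M^{1/p_i}$, since the union factors as a product whose direction-$i$ factor has length $\asymp M$ and whose remaining factors have length $\asymp 1$. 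Plugging into the above display yields
$$\|\Sigma_{\Gamma_i^M}\|_{\dot{F}^\beta_{\vp q}(\va)}\asymp M^{1/p_i},\qquad i=1,2,$$
and the ratio $M^{1/p_1-1/p_2}$ is unbounded as $M\to\infty$, so democracy fails.

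For the ``if'' direction let $p=p_1=\cdots=p_d$, so $L_{\vp}=L_p$ and $\sum_i a_i/p_i=\nu/p$. The problem reduces to proving
$$\|h_\Gamma\|_p\asymp (\#\Gamma)^{1/p},\qquad h_\Gamma(x):=\Big(\sum_{(n,R)\in\Gamma}\bigl(|U(R,n)|^{-1/p}\mathds{1}_{U(R,n)}(x)\bigr)^q\Big)^{1/q}.$$
For the upper bound I observe from \eqref{eq:QD} that $U(R,n)$ depends only on $(j_R,n)$ (independently of the ``direction'' index $k\in E$ of $R=R_{j,k}$), so every distinct spatial cell is shared by at most $|E|=4^d-2^d$ indices of $\Gamma$. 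This uniform multiplicity bound, combined with the standard sub-additivity inequalities in $\ell^q$ and $\ell^p$, gives the pointwise estimate
$$h_\Gamma(x)^p\le C_{|E|,p,q}\sum_{(n,R)\in\Gamma}|U(R,n)|^{-1}\mathds{1}_{U(R,n)}(x),$$
whence $\|h_\Gamma\|_p^p\le C\,\#\Gamma$. The matching lower bound I would obtain by first selecting a sub-family $\Gamma'\subset\Gamma$ of comparable cardinality in which each spatial cell is represented at most once (using the multiplicity bound), and then appealing to the known democratic estimate for the canonical atomic decomposition of the anisotropic but unmixed sequence space $\dot{f}^\beta_{p,q}(\va)$, cf.\ \cite{Garrigos2004}: the key analytic input is a Littlewood-Paley square-function estimate that controls the cross-scale overlaps of the essentially-disjoint tilings $\{U(R,n)\}_{n\in\bN_0^d}$ at each scale.

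The principal obstacle is this cross-scale lower bound in the ``if'' direction; the ``only if'' direction and the multiplicity-based upper bound are essentially computational.
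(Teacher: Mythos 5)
Your approach matches the paper's almost exactly: both directions proceed by passing to the sequence space via Theorem~\ref{prop:modu}, and your ``only if'' construction with translated frequency cells $\{(m\,e_i,R_0)\}_{m<M}$ in two distinct coordinate directions is the same as the paper's $F_N, G_N$ in~\eqref{eq:FN} (the paper uses generic indices $m\neq n$ rather than $1,2$, but this is cosmetic). For the ``if'' direction the paper does not attempt a self-contained proof of the unmixed democracy estimate~\eqref{eq:demo}: it simply notes that a minor adaptation of \cite[Prop.~3.2]{Garrigos2004} gives it. The ``cross-scale lower bound'' you flag as the principal obstacle is exactly what that citation is being asked to supply, so your hesitation is unwarranted provided you check (as the paper asserts without detail) that the Garrig\'os--Hern\'andez argument transfers to the index set $\cR\times\bN_0^d$ with the reinterpretation in~\eqref{eq:redef}. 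One concrete caveat: your direct pointwise upper bound
\[
h_\Gamma(x)^p\le C\sum_{(n,R)\in\Gamma}|U(R,n)|^{-1}\mathds{1}_{U(R,n)}(x)
\]
follows from $\ell^q\hookrightarrow\ell^p$ only when $q\le p$; when $q>p$ this inequality does not hold pointwise (the multiplicity bound within a scale does not prevent unbounded cross-scale overlap, which is the very issue you raise), so you cannot use it to dispose of the upper bound separately from the lower one. Since both bounds are what \cite[Prop.~3.2]{Garrigos2004} establishes, it is cleaner to cite it for the whole two-sided estimate, as the paper does, rather than offering a partial direct argument that only covers part of the parameter range.
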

\begin{proof}
By calling on the characterization given in Theorem \ref{prop:modu}, we notice that quasi-normalization of $\cW$ in $\dot{F}^{\beta}_{\vp,q}(\va)$ implies that 
$$\cW_{\vp}=\{c_{n,R_{j,k}}w_{n,R_{j,k}}:n\in\bN_0^d,j\in\bZ,k\in E\},$$
with normalisation constants $\{c_{n,R_{j,k}}\}$ satisfying
$$c_{n,R_{j,k}}\asymp 2^{j\big(\frac{a_1}{p_1}+\cdots+\frac{a_d}{p_d}-\beta\big)},$$
uniformly in $n,j$ and $k$, with the exponent $\beta-\big(\frac{a_1}{p_1}+\cdots+\frac{a_d}{p_d}\big)$ representing the  differential dimension of  $\dot{F}^{\beta}_{\vp,q}(\va)$. Let us suppose $\cW_{\vp}$ is democratic in $\dot{F}^{\beta}_{\vp,q}(\va)$. For $m,n\in \{1,2,\ldots,d\}$, with $m\not=n$, and $N>1$, put 
\begin{equation}\label{eq:FN}
F_N(x):=\sum_{\ell=1}^N c_{\ell e_n,R_{0,k}}w_{\ell e_n,R_{0,k}}, \qquad G_N(x):=\sum_{\ell=1}^N c_{\ell e_m,R_{0,k}}w_{\ell e_m,R_{0,k}},
\end{equation}
where $e_i$ is the $i$'th basis vector from the canonical basis for $\bR^d$, and  $k\in E$ is fixed. We are supposing that $\cW_{\vp}$ is democratic, so
$$ \|F_N\|_{\dot{F}^{\beta}_{\vp,q}(\va)}\asymp \|G_N\|_{\dot{F}^{\beta}_{\vp,q}(\va)}$$
uniformly in $N\in \bN$. However, using the characterization given in Theorem \ref{prop:modu}, we see that
$$\|F_N\|_{\dot{F}^{\beta}_{\vp,q}(\va)}\asymp
\bigg\|\sum_{\ell=1}^N \mathds{1}_{U(R_{0,k},\ell e_n)}(\cdot)\bigg \|_{\vp}\asymp
 N^{1/p_n},$$ 
 and
 $$\|G_N\|_{\dot{F}^{\beta}_{\vp,q}(\va)}\asymp \bigg\|\sum_{\ell=1}^N \mathds{1}_{U(R_{0,k},\ell e_m)}(\cdot)\bigg \|_{\vp}\asymp N^{1/p_m},$$
so we obtain $ N^{1/p_n}\asymp  N^{1/p_m}$ and conclude that $p_n=p_m$ by letting $N\rightarrow \infty$. Hence, the only case where the $\cW_{\vp}$ can possibly  be democratic is the unmixed  case $p_1=p_2=\cdots=p_d$.

Now suppose $p_1=p_2=\cdots=p_d=p$ for some $p\in(0,\infty)$. Calling on the characterization given in Theorem \ref{prop:modu}, democracy of $\cW_{(p,p,\ldots,p)}$ will follow if we can  prove that the special sequence,
$$\tilde{\mathsf{1}}_\Gamma:=\sum_{F\in \Gamma} \frac{\mathds{1}_F}{\|\mathds{1}_F\|_{\dot{f}^\beta_{(p,p,\ldots,p),q}(\va)}},$$
where $\mathds{1}_F$ denotes the sequence defined on $\cR\times \bN_0^d$ with entry $1$ at $F$ and zero otherwise, satisfies
\begin{equation}\label{eq:demo}
    \|\tilde{\mathsf{1}}_\Gamma\|_{\dot{f}^{\beta}_{(p,p,\ldots,p),q}(\va)}\asymp (\# \Gamma)^{1/p},
\end{equation}
uniformly, for every finite subset $\Gamma\subset \cR\times \bN_0^d$.

However, this particular sequence space estimate in the unmixed setting has been studied in detail by Garrig\'os and Hern\'andez in \cite{Garrigos2004}, where a minor modification to the proof of  \cite[Prop.\ 3.2]{Garrigos2004}, in order to adapt  to \eqref{eq:redef}, confirms the estimate  $\eqref{eq:demo}$.
\end{proof}

We have the following corollary.

\begin{mycor}
Let $\vp=(p_1,p_2,\ldots,p_d)\in (0,\infty)^d$, $d\geq 2$. Let $\cW_{\vp}$ be a re-scaled quasi-normalized version of $\cW$ in  $\dot{F}^{\beta}_{\vp,q}(\va)$. Then $\cW_{\vp}$ is greedy in $\dot{F}^{\beta}_{\vp,q}(\va)$ if and only if  $p_1=p_2=\cdots=p_d$.
\end{mycor}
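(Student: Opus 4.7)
The plan is to simply chain together the two immediately preceding results. By Lemma \ref{le:dem}, the quasi-normalized system $\cW_{\vp}$ is greedy in $\dot{F}^{\beta}_{\vp,q}(\va)$ if and only if it is democratic there, since Theorem \ref{prop:modu} already provides unconditionality, which is the extra hypothesis required to apply the Temlyakov--Konyagin characterization of greedy bases (in the quasi-Banach version noted in the remark).

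Next, I would invoke Proposition \ref{prop:demo}, which gives the precise characterization that $\cW_{\vp}$ is democratic in $\dot{F}^{\beta}_{\vp,q}(\va)$ exactly when $p_1 = p_2 = \cdots = p_d$. Concatenating the two equivalences
\[
\cW_{\vp} \text{ greedy} \iff \cW_{\vp} \text{ democratic} \iff p_1 = p_2 = \cdots = p_d
\]
yields the corollary. No further work or estimates are needed; in particular, there is no genuine obstacle since all the analytic content — the sharp two-sided estimates on $\|F_N\|$ and $\|G_N\|$ built from the functions in \eqref{eq:FN}, and the unmixed democracy estimate \eqref{eq:demo} borrowed from Garrig\'os--Hern\'andez — has already been handled inside Proposition \ref{prop:demo}.

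The only thing worth flagging explicitly in the write-up is that the definition of ``greedy'' requires a biorthogonal system $\{\tilde{g}_j\}$ in $L_2(\bR^d)$, and here one simply takes $\tilde{g}_j = w_{\cn,R}/c_{n,R_{j,k}}$ (with $c_{n,R_{j,k}}$ the quasi-normalization constants from the proof of Proposition \ref{prop:demo}), which is well-defined because $\cW$ is an orthonormal basis for $L_2(\bR^d)$ by Proposition \ref{prop:onb}. Everything else follows directly from the cited results, so the proof is essentially a one-line deduction.
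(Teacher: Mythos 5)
Your proof is correct and reaches the conclusion by essentially the same route as the paper, just organized a bit more economically: you chain Lemma~\ref{le:dem} (greedy $\iff$ democratic, via Konyagin--Temlyakov and the unconditionality from Theorem~\ref{prop:modu}) with Proposition~\ref{prop:demo} (democratic $\iff$ $p_1=\cdots=p_d$), whereas the paper's own proof of the corollary bypasses Lemma~\ref{le:dem} and instead transfers to the sequence space $\dot{f}^{\beta}_{\vp,q}(\va)$ via Theorem~\ref{prop:modu} and re-invokes \cite[Theorem 2.1]{Garrigos2004} there before appealing to (the proof of) Proposition~\ref{prop:demo}. The two arguments are equivalent in content, since Lemma~\ref{le:dem} is itself just the Konyagin--Temlyakov/Garrig\'os--Hern\'andez characterization packaged at the level of the function space; your version is arguably cleaner because it reuses the paper's own intermediate result directly. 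Your side remark on the dual system $\tilde g_j = w_{n,R}/c_{n,R_{j,k}}$ is accurate and harmless, relying only on the orthonormality of $\cW$ from Proposition~\ref{prop:onb}.
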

\begin{proof}
Clearly, by the characterization given in Theorem \ref{prop:modu}, $\cW_{\vp}$ is greedy in $\dot{F}^{\beta}_{\vp,q}(\va)$ if and only if 
$\{\mathds{1}_F\}_{F\in  \cR\times \bN_0^d}$ is greedy in $\dot{f}^{\beta}_{\vp,q}(\va)$. We notice that $\dot{f}^{\beta}_{\vp,q}(\va)$ satisfies the hypothesis of  \cite[Theorem 2.1]{Garrigos2004}, and invoking \cite[Theorem 2.1]{Garrigos2004}, we obtain that $\cW_{\vp}$ is greedy in $\dot{F}^{\beta}_{\vp,q}(\va)$ if and only if  
$$\bigg\{\frac{\mathds{1}_F}{\|\mathds{1}_F\|_{\dot{f}^\beta_{\vp,q}(\va)}}\bigg\}_{F\in  \cR\times \bN_0^d}$$
is democratic in $\dot{f}^{\beta}_{\vp,q}(\va)$. As noted in (the proof of) Proposition \ref{prop:demo}, this happens if and only if $p_1=p_2=\cdots=p_d$.
\end{proof}

\subsection{A mixed-norm Bernstein estimate}
Now we consider a Bernstin estimate for the system $\cW$, where we choose a Besov space to be the embedded smoothness space. 
The technique we employ below to obtain the estimate for $\cW$ is to "unmix" the problem by using the  embedding result given in the following Lemma \ref{lem:emb}, which is proven in \cite[Proposition 3.4]{Cleanthous2019}.

\begin{mylemma}\label{lem:emb}
Let $s,t\in\bR,\;\vec{p}=(p_1,\cdots,p_d)\in(0,\infty)^d,\;q\in(0,\infty]$ and $\vec{a}\in [1,\infty)^n.$ We set $p_{\text{min}}:=\min(p_1,\dots,p_d)$ and 
$p_{\text{max}}:=\max(p_1,\dots,p_d),$ then
$$\dot{B}^t_{p_{\text{min}}q}(\vec{a})\hookrightarrow\dot{B}^s_{\vec{p}q}(\vec{a})\hookrightarrow\dot{B}^\tau_{p_{\text{max}}q}(\vec{a})\;\;\text{and}\;\;\dot{F}^t_{p_{\text{min}}q}(\vec{a})\hookrightarrow\dot{F}^s_{\vec{p}q}(\vec{a})\hookrightarrow\dot{F}^\tau_{p_{\text{max}}q}(\vec{a}),$$
where
$$t=s-\Big(\frac{a_1}{p_1}+\cdots+\frac{a_d}{p_d}\Big)+\frac{\nu}{p_{\text{min}}}\:\:\text{and}\;\;\tau=s-\Big(\frac{a_1}{p_1}+\cdots+\frac{a_d}{p_d}\Big)+\frac{\nu}{p_{\text{max}}}.$$
\end{mylemma}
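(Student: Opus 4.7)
The plan is to base both Besov and Triebel--Lizorkin embeddings on a single scale-invariant \emph{level-$j$ Bernstein inequality}: for every $g\in\cS'/\cP$ with $\supp(\hat g)\subseteq T_j$,
\begin{equation*}
\|g\|_{L_{p_{\max}}}\le C\,2^{j(s-\tau)}\|g\|_{L_{\vp}},\qquad \|g\|_{L_{\vp}}\le C\,2^{j(s-t)}\|g\|_{L_{p_{\min}}},
\end{equation*}
with constants independent of $j\in\bZ$. The stated exponents are precisely those forced by matching differential dimensions $s-\sum_i a_i/p_i=\tau-\nu/p_{\max}=t-\nu/p_{\min}$, which is necessary for any such embedding. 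To prove this inequality I would perform the anisotropic dilation $g\mapsto g(2^{-j\va}\cdot)$ to reduce to $j=0$, where $\hat g$ lives in a fixed compact rectangle, and then iterate the classical one-variable Bernstein inequality for bandlimited functions inside the mixed-norm nesting of~\eqref{Lp}: coordinate by coordinate one replaces the $L_{p_i}(dx_i)$ slice norm by $L_{p_{\max}}(dx_i)$ (resp.\ $L_{p_{\min}}(dx_i)$) at the cost of a universal constant. Undoing the rescaling produces the factor $2^{j\sum_i a_i(1/p_i-1/p_{\max})}=2^{j(s-\tau)}$, and similarly for $p_{\min}$.

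For the Besov case the argument finishes immediately: apply the level-$j$ inequality to $g=\varphi_j\ast f$, multiply by the appropriate power of $2^j$, and take the $\ell_q$-sum over $j\in\bZ$.

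The Triebel--Lizorkin case is the main obstacle, because the $\ell_q$-aggregation in~\eqref{TLnorm} sits \emph{inside} the mixed $L_{\vp}$-norm, so a level-by-level estimate does not transfer directly. The cleanest remedy is to pass to the sequence space via Theorem~\ref{th:main} and instead establish the analogous discrete inclusion $\dot f^s_{\vp q}(\va)\hookrightarrow\dot f^\tau_{p_{\max}\,q}(\va)$. At level $j$ the function $\sum_{Q\in\cQ_j}|a_Q|\widetilde{\mathds{1}}_Q$ is piecewise constant on a tiling of $\bR^d$ by rectangles of volume $2^{-j\nu}$, and its $L_{\vp}$, $L_{p_{\max}}$ and $L_{p_{\min}}$ norms admit closed discrete expressions. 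Applying monotonicity of finite-dimensional $\ell^p$-norms one coordinate at a time, in the same nesting order as the mixed norm, reproduces exactly the factors $2^{j(s-\tau)}$ and $2^{j(s-t)}$; since these estimates are pointwise and monotone in the coefficients, the outer $\ell_q$-aggregation over $j$ survives unchanged. The main point needing care is the combinatorial bookkeeping of the dyadic tile geometry against the mixed-norm nesting, but it parallels exactly the one-variable Bernstein iteration used at the function level.
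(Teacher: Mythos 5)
The paper does not supply its own proof of Lemma~\ref{lem:emb}; it cites \cite[Proposition 3.4]{Cleanthous2019}, so there is no in-paper argument to compare against. On the merits: the Besov half of your plan is sound in spirit. Reducing to a scale-invariant Nikol'skii--Bernstein inequality for frequency-localized functions in mixed-norm Lebesgue spaces and then summing in $\ell_q$ over $j$ is the right route, and the exponents are indeed forced by matching differential dimensions (the sign in your second displayed inequality should read $2^{j(t-s)}$, but that is a slip). One technical caveat: the inside-out iteration you describe, replacing $L_{p_i}(dx_i)$ by $L_{p_{\max}}(dx_i)$ in the innermost slice and then proceeding outward, breaks after the first coordinate, because $x_2\mapsto\|g(\cdot,x_2,\ldots,x_d)\|_{L_{p_{\max}}(dx_1)}$ is no longer bandlimited in $x_2$. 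The correct formulation is the vector-valued one-dimensional Nikol'skii inequality applied from the outermost variable inward, or equivalently the convolution representation $g=g*K$ together with a mixed-norm Young inequality; this is a known and fixable imprecision.

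The Triebel--Lizorkin half has a genuine gap. After passing to sequence spaces via Theorem~\ref{th:main}, the relevant quantity is $\bigl\|\bigl(\sum_j(\cdot)^q\bigr)^{1/q}\bigr\|_{\vp}$, so the $\ell_q$-sum over $j$ sits \emph{inside} the $L_{\vp}$-norm, not outside. The level-$j$ discrete inequality you construct,
\[
\Bigl\|2^{j\tau}\sum_{Q\in\cQ_j}|a_Q|\widetilde{\mathds{1}}_Q\Bigr\|_{p_{\max}}\le\Bigl\|2^{js}\sum_{Q\in\cQ_j}|a_Q|\widetilde{\mathds{1}}_Q\Bigr\|_{\vp},
\]
which does follow from monotonicity of $\ell^p$-norms on piecewise-constant functions together with the correct volume bookkeeping, is a bound between \emph{norms} at each fixed $j$; it is not a pointwise bound, so it cannot be inserted under the $\ell_q$-sum over $j$ before taking the outer $L_{\vp}$-norm. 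The sentence ``since these estimates are pointwise \ldots\ the outer $\ell_q$-aggregation over $j$ survives unchanged'' is precisely where the argument fails: for $\dot F$- and $\dot f$-spaces there is no outer $\ell_q$-aggregation over $j$. Even in the unmixed isotropic setting, the corresponding Jawerth--Franke embedding $\dot F^{s_0}_{p_0,q_0}\hookrightarrow\dot F^{s_1}_{p_1,q_1}$ with $p_0<p_1$ and arbitrary $q_0,q_1$ requires a genuinely different device (rearrangement along the scale index, duality, or an atomic decomposition) than the level-by-level Besov argument, and your outline does not supply a mixed-norm version of any such device. As written, only the Besov embeddings are actually established.
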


We have the following Bernstein inequality for the system $\cW$.
\begin{myprop}\label{prop:bern}
Let $\vp,\vec{\tau}\in (0,\infty)^d$, with $\tau_i<p_i$, $i=1,\ldots,d$, where we also suppose 
$\tau_{\text{min}}:=\min\{\tau_i\}_{i=1}^d<p_{\text{max}}:=\max\{p_i\}_{i=1}^d$,
and let  $0<q<\infty$. Then there exists a constant $C$ such that for any $m\geq 1$, $0<r\leq \infty$, and $\alpha>0$,
$$\|S\|_{\dot{B}^{\alpha}_{\vec{\tau},q}(\va)}\leq Cm^{1/\tau_{\text{min}}-1/p_{\text{max}}} \|S\|_{\dot{F}^{\beta}_{\vp,r}(\va)},\qquad S\in \Sigma_m(\cW),
$$
with
$$\alpha-\beta=\sum_{j=1}^d \frac{a_i}{\tau_i}-\sum_{j=1}^d \frac{a_i}{p_i},$$
and $\tau_{\text{min}}=\min\{\tau_i\}_{i=1}^d$ and $p_{\text{max}}=\max\{p_i\}_{i=1}^d$. Moreover, the exponent $1/\tau_{\text{min}}-1/p_{\text{max}}$ is the best possible.
\end{myprop}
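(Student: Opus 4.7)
The plan is to reduce the mixed-norm Bernstein inequality to its unmixed (but still anisotropic) counterpart via the two-sided embeddings of Lemma \ref{lem:emb}. Introduce the auxiliary exponents
\begin{equation*}
\alpha':=\alpha-\sum_{i=1}^{d}\frac{a_i}{\tau_i}+\frac{\nu}{\tau_{\min}},\qquad \beta':=\beta-\sum_{i=1}^{d}\frac{a_i}{p_i}+\frac{\nu}{p_{\max}},
\end{equation*}
where $\tau_{\min}=\min_i \tau_i$ and $p_{\max}=\max_i p_i$. Lemma \ref{lem:emb} then yields the two embedding estimates $\|S\|_{\dot{B}^{\alpha}_{\vec{\tau},q}(\va)}\lesssim \|S\|_{\dot{B}^{\alpha'}_{\tau_{\min},q}(\va)}$ and $\|S\|_{\dot{F}^{\beta'}_{p_{\max},r}(\va)}\lesssim \|S\|_{\dot{F}^{\beta}_{\vp,r}(\va)}$, while the hypothesis $\alpha-\beta=\sum_i(a_i/\tau_i-a_i/p_i)$ immediately gives the matching differential-dimension gap $\alpha'-\beta'=\nu(1/\tau_{\min}-1/p_{\max})$, which is exactly the gap required for an unmixed Bernstein inequality at rate $m^{1/\tau_{\min}-1/p_{\max}}$.

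The problem thereby reduces to establishing the unmixed Bernstein estimate
\begin{equation*}
\|S\|_{\dot{B}^{\alpha'}_{\tau_{\min},q}(\va)}\leq C\, m^{1/\tau_{\min}-1/p_{\max}}\,\|S\|_{\dot{F}^{\beta'}_{p_{\max},r}(\va)},\qquad S\in\Sigma_m(\cW),
\end{equation*}
which, via the norm characterization of Theorem \ref{prop:modu}, is equivalent to a classical sequence-space Bernstein inequality between $\dot{f}^{\beta'}_{p_{\max},r}(\va)$ and $\dot{b}^{\alpha'}_{\tau_{\min},q}(\va)$ for finitely supported canonical sequences. In the isotropic wavelet setting this was established by Garrig\'os and Hern\'andez in \cite{Garrigos2004}; the anisotropy and the switch from wavelets to the brushlet family $\cW$ enter only through the uniform scaling $|R|\asymp 2^{j\nu}$ and the bounded-overlap property of the localization sets $\{U(R,n)\}_{n}$ at each fixed scale, so their argument transfers essentially verbatim. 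Chaining the unmixed inequality with the two embeddings yields the claimed bound.

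For the sharpness of the exponent the plan is to exhibit explicit test sequences $S_m\in\Sigma_m(\cW)$ whose norm ratios realize (up to constants) the bound $m^{1/\tau_{\min}-1/p_{\max}}$. Pick indices $i_*, i^*\in\{1,\dots,d\}$ with $\tau_{i_*}=\tau_{\min}$ and $p_{i^*}=p_{\max}$; by Theorem \ref{prop:modu} and the disjointness of the sets $\{U(R_{0,k},n)\}_n$, a single-direction packet $\sum_{\ell=1}^N w_{\ell e_{i^*},R_{0,k}}$ has $\dot{F}^{\beta}_{\vp,r}(\va)$-norm of order $N^{1/p_{\max}}$ but $\dot{B}^{\alpha}_{\vec{\tau},q}(\va)$-norm only of order $N^{1/\tau_{i^*}}$, which is in general strictly smaller than $N^{1/\tau_{\min}}$. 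I anticipate the main technical obstacle to lie precisely here: when $i_*\neq i^*$ one must combine brushlets across two coordinate directions (and possibly across several dyadic scales) to saturate \emph{both} the minimum of the $\tau_i$'s and the maximum of the $p_i$'s simultaneously. Carefully balancing such a two-direction tensor packet, computing its norms via the Fubini structure of $L_{\vp}$ and $L_{\vec{\tau}}$, and verifying that the combinatorics of disjoint supports reproduces exactly the exponent $1/\tau_{\min}-1/p_{\max}$ without parasitic logarithmic losses, is the main piece of work in the sharpness argument.
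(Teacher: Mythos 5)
Your reduction of the mixed-norm Bernstein estimate to the unmixed sequence-space Bernstein inequality of Garrig\'os and Hern\'andez, via the lower Besov embedding into $\dot{B}^{\alpha'}_{\tau_{\min},q}(\va)$ and the upper Triebel--Lizorkin embedding of $\dot{F}^{\beta}_{\vp,r}(\va)$ into $\dot{F}^{\beta'}_{p_{\max},r}(\va)$, is exactly the route taken in the paper: your $\alpha'$ and $\beta'$ coincide with the paper's intermediate parameters $t$ and $s$, the exponent bookkeeping is the same, and the invocation of the sequence-space result \cite[Theorem 5.4]{Garrigos2004} together with Theorem~\ref{prop:modu} is as in the paper. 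So for the direct inequality you have, in effect, reproduced the paper's proof.

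On the sharpness claim, your observation is the more careful one. You correctly note that a single-direction packet $\sum_{\ell\le N} c_{\ell e_{i^*},R_{0,k}} w_{\ell e_{i^*},R_{0,k}}$, with $p_{i^*}=p_{\max}$, yields a ratio $N^{1/\tau_{i^*}-1/p_{\max}}$, which equals $N^{1/\tau_{\min}-1/p_{\max}}$ only when $\tau_{i^*}=\tau_{\min}$, i.e.\ only when $\tau_{\min}$ and $p_{\max}$ are attained at the same coordinate. The paper's own proof of optimality explicitly restricts to that scenario (``we consider the scenario where for some $n$, $\tau_{\min}=\tau_n$ and $p_{\max}=p_n$''), so the paper does not cover the case $i_*\neq i^*$ either. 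You go further and sketch why a simple fix is not available: a single-scale rectangular packet of side lengths $M_1\times\cdots\times M_d$ produces a ratio of order $\prod_i M_i^{1/\tau_i-1/p_i}$, whose maximum over $\prod M_i=N$ is $N^{\max_i(1/\tau_i-1/p_i)}$, and this is strictly less than $N^{1/\tau_{\min}-1/p_{\max}}$ unless the extrema align. So achieving the stated exponent (if it is indeed achievable) in the misaligned case would require genuinely different test sequences, possibly spread across several dyadic scales, as you anticipate. You have not closed this gap, but you have correctly localized it, and it is a gap that is present in the paper's proof as well; the statement of Proposition~\ref{prop:bern} should properly be read as asserting optimality only in the aligned case that the proof actually treats.
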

\begin{proof}
Let $S\in \Sigma_m(\cW)$. Using the (lower) Besov embedding from Lemma \ref{lem:emb}, with
\begin{equation}\label{eq:bern1}
    t=\alpha- \sum_{j=1}^d \frac{a_j}{\tau_j}+\frac{\nu}{\tau_{\text{min}}},
\end{equation}
we obtain
\begin{align*}
\|S\|_{\dot{B}^{\alpha}_{\vec{\tau},q}(\va)}&\lesssim \|S\|_{\dot{B}^{t}_{\tau_{\text{min}},q}(\va)}\\&\lesssim
\|\{\langle S,w_{n,R}\rangle\}_{n,R} \|_{\dot{b}^{t}_{\tau_{\text{min}},q}(\va)}.
\end{align*}
We continue the estimate, where we call on the known Bernstein estimate for sequence spaces  in the unmixed setting \cite[Theorem 5.4]{Garrigos2004}, with parameters
\begin{equation}\label{eq:bern2}s=t+\frac{\nu}{p_{\text{max}}}-\frac{\nu}{\tau_{\text{min}}}.
\end{equation}
We obtain, for $0<r\leq \infty$,
\begin{align*}
\|\{\langle S,w_{n,R}\rangle\}_{n,R} \|_{\dot{b}^{t}_{\tau_{\text{min}},q}}&\lesssim m^{1/\tau_{\text{min}}-1/p_{\text{max}}}\|\{\langle S,w_{n,R}\rangle\}_{n,R} \|_{\dot{f}^{s}_{p_{\text{max}},r}(\va)}\\
&\lesssim m^{1/\tau_{\text{min}}-1/p_{\text{max}}}\|\{\langle S,w_{n,R}\rangle\}_{n,R} \|_{\dot{F}^{s}_{p_{\text{max}},r}(\va)}\\
&\lesssim m^{1/\tau_{\text{min}}-1/p_{\text{max}}}\|\{\langle S,w_{n,R}\rangle\}_{n,R} \|_{\dot{F}^{\beta}_{\vp,r}(\va)},
\end{align*}
where the last inequality follows by the upper embedding from Lemma \ref{lem:emb} with
\begin{equation}\label{eq:bern3}\beta=s+\sum_{j=1}^d \frac{a_j}{p_j}-\frac{\nu}{p_{\text{max}}}.
\end{equation}
Hence, we obtain
$$\|S\|_{\dot{B}^{\alpha}_{\vp,q}(\va)}\leq Cm^{1/\tau_{\text{min}}-1/p_{\text{max}}} \|S\|_{\dot{F}^{\beta}_{\vp,r}(\va)},$$
where, solving for $\alpha$ and $\beta$ in the system of equations given by \eqref{eq:bern1},  \eqref{eq:bern2}, and \eqref{eq:bern3},
$$\alpha-\beta=\sum_{j=1}^d \frac{a_i}{\tau_i}-\sum_{j=1}^d \frac{a_i}{p_i}.$$
Finally, to verify that the exponent $1/\tau_{\text{min}}-1/p_{\text{max}}$  is optimal, we consider the scenario where for some $n\in \{1,2,\ldots,d\}$, 
$\tau_{\text{min}}=\tau_n$ and $p_{\text{max}}=p_n$. Then we consider the function $F_N\in \Sigma_N(\cW)$ given in Eq.\ \eqref{eq:FN}, where we use the same calculation as in the proof of Proposition \ref{prop:demo} to deduce that, uniformly in $N$, 
$$\|F_N\|_{\dot{B}^{\alpha}_{\vec{\tau},q}(\va)}\asymp  N^{1/\tau_n}\quad\text{and}\quad \|F_N\|_{\dot{F}^{\beta}_{\vp,q}(\va)}
\asymp
 N^{1/p_n}.$$
 Hence,
 $$\|F_N\|_{\dot{B}^{\alpha}_{\vec{\tau},q}(\va)}\asymp N^{1/\tau_n-1/p_n}\|F_N\|_{\dot{F}^{\beta}_{\vp,q}(\va)},$$
 which proves the claim.
\end{proof}
\begin{myre}
In the unmixed case, we recover a Bernstein estimate similar to the known estimate for wavelet system with the perhaps more familiar looking smoothness relation $(\alpha-\beta)/\nu=1/\tau-1/p$. However, we allow for a completely arbitrary anisotropy $\va$, so even in the unmixed case, Proposition \ref{prop:bern} covers  new cases.  
\end{myre}

\subsection{A mixed-norm Jackson estimate}
We have the following Jackson inequality for the system $\cW$.
\begin{myprop}\label{prop:jack}
Let $\vp,\vec{\tau}\in (0,\infty)^d$, with $\tau_i<p_i$, $i=1,\ldots,d$, where we also suppose
 $\tau_{\text{max}}:=\max\{\tau_i\}_{i=1}^d<p_{\text{min}}:=\min\{p_i\}_{i=1}^d$,
 and let  $0<q<\infty$. Then there exists a constant $C$ such that for any $m\geq 1$, $0<r\leq \infty$, and $\alpha\in \bR$,
$$\sigma_m(f,\cW)_{\dot{F}^{\beta}_{\vp,r}(\va)}\leq Cm^{-(1/\tau_{\text{max}}-1/p_{\text{min}})} \|f\|_{\dot{B}^{\alpha}_{\vec{\tau},\tau}(\va)},\qquad f\in \dot{B}^{\alpha}_{\vec{\tau},r}(\va),
$$
with
$$\alpha-\beta=\sum_{j=1}^d \frac{a_i}{\tau_i}-\sum_{j=1}^d \frac{a_i}{p_i}.$$
\end{myprop}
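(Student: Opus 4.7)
The plan is to adapt the ``unmixing'' strategy used for the Bernstein estimate in Proposition \ref{prop:bern}: first transport $f$ from the mixed-norm Besov space to an unmixed Besov sequence space via Lemma \ref{lem:emb}, apply the known unmixed Jackson inequality of Garrig\'os and Hern\'andez at the sequence-space level, and then transport the resulting approximation back to the target mixed-norm Triebel-Lizorkin space using the reconstruction operator $T_{\cW}$ and another application of Lemma \ref{lem:emb}.

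In detail, starting with $f\in \dot{B}^{\alpha}_{\vec{\tau},\tau}(\va)$, the upper Besov embedding of Lemma \ref{lem:emb} together with the Besov part of Theorem \ref{prop:modu} yields
$$\big\|\{\langle f,w_{n,R}\rangle\}\big\|_{\dot{b}^{t}_{\tau_{\text{max}},\tau}(\va)}\lesssim \|f\|_{\dot{B}^{t}_{\tau_{\text{max}},\tau}(\va)}\lesssim \|f\|_{\dot{B}^{\alpha}_{\vec{\tau},\tau}(\va)},\qquad t:=\alpha-\sum_{j=1}^d \frac{a_j}{\tau_j}+\frac{\nu}{\tau_{\text{max}}}.$$
At the sequence-space level, the unmixed Jackson inequality of Garrig\'os-Hern\'andez \cite[Theorem 5.4]{Garrigos2004}, applied with $s:=t-\nu/\tau_{\text{max}}+\nu/p_{\text{min}}$ (so that $t-s=\nu(1/\tau_{\text{max}}-1/p_{\text{min}})>0$ by the hypothesis $\tau_{\text{max}}<p_{\text{min}}$), produces a sequence $\tilde{c}=\{\tilde{c}_{n,R}\}$ supported on at most $m$ indices with
$$\big\|\{\langle f,w_{n,R}\rangle-\tilde{c}_{n,R}\}\big\|_{\dot{f}^{s}_{p_{\text{min}},r}(\va)}\lesssim m^{-(1/\tau_{\text{max}}-1/p_{\text{min}})}\big\|\{\langle f,w_{n,R}\rangle\}\big\|_{\dot{b}^{t}_{\tau_{\text{max}},\tau}(\va)}.$$
Setting $S_m:=T_{\cW}\tilde{c}\in \Sigma_m(\cW)$ and invoking Proposition \ref{lem:reconstruct} together with $T_{\cW}\circ S_{\cW}=\mathrm{Id}$ converts this into $\|f-S_m\|_{\dot{F}^{s}_{p_{\text{min}},r}(\va)}\lesssim m^{-(1/\tau_{\text{max}}-1/p_{\text{min}})}\|f\|_{\dot{B}^{\alpha}_{\vec{\tau},\tau}(\va)}$. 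A final application of the lower Triebel-Lizorkin embedding from Lemma \ref{lem:emb}, namely $\dot{F}^{s}_{p_{\text{min}},r}(\va)\hookrightarrow \dot{F}^{\beta}_{\vp,r}(\va)$ with $\beta:=s-\nu/p_{\text{min}}+\sum_j a_j/p_j$, delivers the announced Jackson inequality, and the three parameter relations telescope to $\alpha-\beta=\sum_j a_j/\tau_j - \sum_j a_j/p_j$.

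The main technical point is verifying that the cited Jackson inequality of \cite[Theorem 5.4]{Garrigos2004}, originally phrased over the standard dyadic grid $\cQ$, transfers verbatim to the Lizorkin-indexed discrete spaces $\dot{b}^{t}_{\tau_{\text{max}},\tau}(\va)$ and $\dot{f}^{s}_{p_{\text{min}},r}(\va)$ as defined through \eqref{eq:redef} and \eqref{eq:redBnorm}. This parallels the minor adaptation already used in the proof of Proposition \ref{prop:demo}: the tiles $\{U(R,n)\}_{n\in\bN_0^d}$ form a uniformly bounded-overlap cover of $\bR^d$ with the correct anisotropic scaling $|U(R,n)|\asymp |R|^{-1}$, so the level-by-level democracy and thresholding arguments underlying the Garrig\'os-Hern\'andez proof go through unchanged. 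Once this transfer is in place, the embedding-based reduction is essentially bookkeeping and mirrors the Bernstein argument step for step, with arrows and parameter inequalities reversed.
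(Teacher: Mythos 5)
Your proposal is essentially the paper's own proof: the paper uses exactly this ``unmix--Jackson--remix'' strategy, applying the lower Triebel--Lizorkin embedding of Lemma \ref{lem:emb} to replace $\dot{F}^{\beta}_{\vp,r}(\va)$ by $\dot{F}^{s}_{p_{\min},r}(\va)$, the upper Besov embedding to replace $\dot{B}^{\alpha}_{\vec{\tau},\tau}(\va)$ by $\dot{B}^{t}_{\tau_{\max},\tau}(\va)$, and the unmixed Garrig\'os--Hern\'andez Jackson estimate at the sequence-space level in between, with the three parameter relations telescoping exactly as you describe. The only slip is the citation: the Jackson sequence-space estimate is \cite[Theorem 4.3]{Garrigos2004}, not Theorem 5.4 (the latter is the Bernstein companion used in Proposition \ref{prop:bern}).
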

\begin{myre}
It will actually follow from the proof of Proposition \ref{prop:jack} below that we do have the embedding $\dot{B}^{\alpha}_{\vec{\tau},r}(\va)\hookrightarrow \dot{F}^{\beta}_{\vp,r}(\va)$, so $\sigma_m(f,\cW)_{\dot{F}^{\beta}_{\vp,r}(\va)}$ is well-defined for $f\in \dot{B}^{\alpha}_{\vec{\tau},r}(\va)$ with the specified parameters.
\end{myre}
\begin{proof}[Proof of Proposition \ref{prop:jack}]
Given $f\in \dot{B}^{\beta}_{\vec{\tau},r}(\va)$, $m\in \bN$, and $S_m\in \Sigma_m(\cW)$.
 Using the (lower) Triebel-Lizorkin embedding from Lemma \ref{lem:emb}, with
\begin{equation}\label{eq:jack1}
    s=\beta- \sum_{j=1}^d \frac{a_j}{p_j}+\frac{\nu}{p_{\text{min}}},
\end{equation}
we obtain
\begin{align*}
\|f-S_m\|_{\dot{F}^{\beta}_{\vp,r}(\va)}&\lesssim \|f-S_n\|_{\dot{F}^{s}_{p_{\text{min}},r}(\va)}    \\
&\lesssim 
\|\{\langle f-S_m,w_{n,R}\rangle\}_{n,R} \|_{\dot{f}^{s}_{p_{\text{min}},r}(\va)}.
\end{align*}
We now choose $S_m$ such that 
$$\|\{\langle f-S_m,w_{n,R}\rangle\}_{n,R} \|_{\dot{f}^{s}_{p_{\text{max}},r}(\va)}\leq 2\sigma_m(\{\langle f,w_{n,R}\rangle\}_{n,R})_{\dot{f}^{s}_{p_{\text{min}},r}(\va)}. $$
Hence, with this choice of $S_m$,
$$\|f-S_m\|_{\dot{F}^{\beta}_{\vp,r}(\va)}\lesssim \sigma_m(\{\langle f,w_{n,R}\rangle\}_{n,R})_{\dot{f}^{s}_{p_{\text{min}},r}(\va)}.$$
We now call on a known Jackson estimate for sequence spaces  in the unmixed setting \cite[Theorem 4.3]{Garrigos2004}, with parameters
\begin{equation}\label{eq:jack2} t-s=\frac{\nu}{\tau_{\text{max}}}-\frac{\nu}{p_{\text{min}}}.
\end{equation}
We obtain, for $0<r\leq \infty$,
\begin{align*}
  \sigma_m(\{\langle f,w_{n,R}\rangle\}_{n,R})_{\dot{f}^{s}_{p_{\text{min}},r}(\va)}&\lesssim
  m^{-(1/\tau_{\text{max}}-1/p_{\text{min}})}\|\{\langle f,w_{n,R}\rangle\}_{n,R}\|_{\dot{b}^{t}_{\tau_{\text{max}},r}(\va)}\\
  &\lesssim
  m^{-(1/\tau_{\text{max}}-1/p_{\text{min}})}\|f\|_{\dot{B}^{t}_{\tau_{\text{max}},r}(\va)}\\
   &\lesssim
  m^{-(1/\tau_{\text{max}}-1/p_{\text{min}})}\|f\|_{\dot{B}^{\alpha}_{\vec{\tau},r}(\va)},
\end{align*}
where the last inequality follows from the Besov space embedding in Lemma \ref{lem:emb} with
\begin{equation}\label{eq:jack3}\alpha=s+\sum_{j=1}^d \frac{a_j}{\tau_j}-\frac{\nu}{\tau_{\text{max}}}.
\end{equation}
Hence,
\begin{align*}
\sigma_m(f,\cW)_{\dot{F}^{\beta}_{\vp,r}(\va)}\leq \|f-S_m\|_{\dot{F}^{\beta}_{\vp,r}(\va)}&\lesssim
m^{-(1/\tau_{\text{max}}-1/p_{\text{min}})}\|f\|_{\dot{B}^{\alpha}_{\vec{\tau},r}(\va)}.
\end{align*}
where, solving for $\alpha$ and $\beta$ in the system of equations given by \eqref{eq:jack1},  \eqref{eq:jack2}, and \eqref{eq:jack3},
$$\alpha-\beta=\sum_{j=1}^d \frac{a_i}{\tau_i}-\sum_{j=1}^d \frac{a_i}{p_i}.$$
\end{proof}

\begin{myre}
The fact that $\cW$ cannot be normalized to be greedy in $\dot{F}^{\beta}_{\vp,r}(\va)$, except in the unmixed case,  makes an estimate of the error of best $m$-term approximation more illusive, and at present we do not know whether the Jackson inequality exponent in Proposition \ref{prop:jack} is optimal.
 
 However, we may replace best $m$-term approximation by greedy $m$-term approximation in Proposition \ref{prop:jack}, relying on exactly the same proof. For greedy $m$-term approximation, the Jackson exponent in Proposition \ref{prop:jack} {\em is} optimal. To verify this, consider the scenario where for some $n\in \{1,2,\ldots,d\}$, 
$\tau_{\text{max}}=\tau_n$ and $p_{\text{min}}=p_n$. Then we consider the function $F_N$ given in Eq.\ \eqref{eq:FN}. We notice, for small $\epsilon>0$,
$$(F_{2N}-\epsilon F_N)-\mathcal{G}_N(F_{2N}-\epsilon F_N)=(1-\epsilon) F_N,$$
which implies that
$$\|(F_{2N}-\epsilon F_N)-\mathcal{G}_m(F_{2N}-\epsilon F_N)\|_{\dot{F}^{\beta}_{\vp,r}(\va)}\asymp N^{1/p_n},$$
while
$$\|F_N\|_{\dot{B}^{\alpha}_{\vec{\tau},\tau}(\va)}\asymp N^{1/\tau_n},$$
so
$$\|(F_{2N}-\epsilon F_N)-\mathcal{G}_N(F_{2N}-\epsilon F_N)\|_{\dot{F}^{\beta}_{\vp,r}(\va)}\asymp N^{1/\tau_n-1/p_n}\|F_N\|_{\dot{B}^{\alpha}_{\vec{\tau},\tau}(\va)},$$
and letting $N\rightarrow \infty$  proves optimality.
\end{myre}
\subsection{Embeddings of the mixed-norm approximation spaces}
It is well-known from the general theory of non-linear approximation  that Jackson and Bernstein estimates can be used to obtain embedding results for the approximation spaces $\mathcal{A}^\alpha_q(\dot{F}^{\beta}_{\vp,q}(\va),\cD)$, and in the optimal scenario, where the exponents of the Bernstein and Jackson estimates match, we get a full characterization of $\mathcal{A}^\alpha_q(\dot{F}^{\beta}_{\vp,q}(\va),\cD)$ as a real interpolation space.

However, we notice from Propositions \ref{prop:bern} and \ref{prop:jack} that the exponents in the present setup, in general, {\em do not} match up. In face, the only case where we obtain matching exponents is in the unmixed case. We summarize the embeddings that can be obtained directly from  Propositions \ref{prop:bern} and \ref{prop:jack} in the following Corollary that will conclude this paper, where we refer to \cite{DeVore1993} for a discussion of interpolation spaces, including the real method of interpolation, which is used in the statement of the result.

\begin{mycor}
Let $\vp,\vec{\tau}\in (0,\infty)^d$, with $\tau_i<p_i$, $i=1,\ldots,d$, where we also suppose
 $\tau_{\text{max}}:=\max\{\tau_i\}_{i=1}^d<p_{\text{min}}:=\min\{p_i\}_{i=1}^d$,
 and suppose  $0<q<\infty$. Put $$r_J:=\frac{1}{\tau_{\text{max}}}-\frac{1}{p_{\text{min}}}\quad\text{ and }\quad r_B:=\frac{1}{\tau_{\text{min}}}-\frac{1}{p_{\text{max}}}.$$
For $0<s<r_J$, and $0<t\leq \infty$, we have the Jackson embedding
  $$ \big(\dot{F}^{\beta}_{\vp,q}(\va),\dot{B}^{\alpha}_{\vec{\tau},\tau}(\va)\big)_{s/r_J,t}\hookrightarrow \mathcal{A}^s_t(\dot{F}^{\beta}_{\vp,q}(\va),\cW).$$
 For $0<s<r_B$, and $0<t\leq \infty$, we have the Bernstein embedding
$$
\mathcal{A}^s_t(\dot{F}^{\beta}_{\vp,q}(\va),\cW) \hookrightarrow
\big(\dot{F}^{\beta}_{\vp,q}(\va),\dot{B}^{\alpha}_{\vec{\tau},\tau}(\va)\big)_{s/r_B,t}.$$
\end{mycor}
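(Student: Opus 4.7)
The plan is to derive both embeddings directly from the Jackson and Bernstein estimates established in Propositions \ref{prop:jack} and \ref{prop:bern}, invoking the general machinery of nonlinear approximation developed in \cite[Chapter 7]{DeVore1993}. The setup is to take $X:=\dot{F}^{\beta}_{\vp,q}(\va)$ and $Y:=\dot{B}^{\alpha}_{\vec{\tau},\tau}(\va)$, with the dictionary $\cD=\cW$. A preliminary observation, which is recorded in the remark following Proposition \ref{prop:jack} and which follows directly from the chain of embeddings used in its proof (the lower Triebel–Lizorkin embedding combined with the standard embedding $\dot{b}^{t}_{\tau_{\text{max}},r}\hookrightarrow \dot{f}^{s}_{p_{\text{min}},r}$ for the prescribed parameters, followed by the Besov embedding from Lemma \ref{lem:emb}), is that $Y \hookrightarrow X$. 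Thus $\sigma_m(f,\cW)_X$ is a meaningful quantity for every $f\in Y$.

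For the Jackson embedding, I would invoke the abstract result that whenever a direct estimate
$$\sigma_m(f,\cD)_X\le C m^{-r_J}\|f\|_Y,\qquad f\in Y,\ m\in\bN,$$
is available for a pair $Y\hookrightarrow X$, then the real interpolation space $(X,Y)_{s/r_J,t}$ is continuously embedded in the approximation space $\mathcal{A}^s_t(X,\cD)$ for every $0<s<r_J$ and $0<t\le\infty$. The standard argument realises an element $f\in (X,Y)_{s/r_J,t}$ through the K-functional $K(f,u;X,Y)$, splits $f=g_u+(f-g_u)$ near-optimally, and applies the Jackson estimate to $g_u\in Y$ to bound $\sigma_m(f,\cD)_X$ by a combination of $K$-functional values. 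Discretising the resulting integral over $u$ via the dyadic scale $u=2^{-k}$ then exhibits the approximation space quasi-norm in terms of the interpolation quasi-norm. The Jackson estimate with exponent $r_J$ required as input is exactly Proposition \ref{prop:jack}.

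For the Bernstein embedding, I would invoke the companion abstract result: if an inverse estimate
$$\|S\|_Y\le C m^{r_B}\|S\|_X,\qquad S\in\Sigma_m(\cD),$$
holds, and the dictionary satisfies the structural condition $\Sigma_m(\cD)+\Sigma_n(\cD)\subseteq \Sigma_{m+n}(\cD)$ (trivially true for $\cW$), then $\mathcal{A}^s_t(X,\cD)\hookrightarrow (X,Y)_{s/r_B,t}$ for $0<s<r_B$ and $0<t\le\infty$. The standard proof picks near-best approximants $S_{2^k}\in \Sigma_{2^k}(\cD)$, estimates $\|S_{2^{k+1}}-S_{2^k}\|_Y$ by the Bernstein inequality applied to a sum in $\Sigma_{2^{k+2}}(\cD)$, and sums a telescoping series to bound $K(f,u;X,Y)$ in terms of the sequence $\{\sigma_{2^k}(f,\cD)_X\}_k$. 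The requisite Bernstein estimate with exponent $r_B$ is Proposition \ref{prop:bern}.

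The only point requiring a brief sanity check is the validity of this abstract machinery in the quasi-Banach regime (when $\min(\vp,q)<1$), but this extension is by now standard — the arguments in \cite[Chapter 7]{DeVore1993} go through verbatim once one passes to the $u$-functional associated with the quasi-triangle inequality and keeps track of the constant, as the same type of extension was already used in \cite{Garrigos2004} in the companion unmixed setting. Thus the main, and essentially only, obstacle in passing from the quasi-Banach Jackson/Bernstein estimates to the stated embeddings has already been dealt with in the cited references, and the corollary follows by direct application of the general theory to the two estimates proven above.
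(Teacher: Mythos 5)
Your proposal is correct and takes exactly the same route as the paper: both derive the two embeddings from Propositions \ref{prop:jack} and \ref{prop:bern} via the standard Jackson/Bernstein-to-interpolation machinery of \cite[Chap.\ 7]{DeVore1993}, with the quasi-Banach extension noted as standard. You spell out the underlying $K$-functional argument in more detail than the paper (which simply cites the reference), but the substance is identical.
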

\begin{proof}
  Follows from Propositions \ref{prop:bern} and \ref{prop:jack}  combined with  standard embedding results, see e.g.\ \cite[Chap.\ 7]{DeVore1993}.
\end{proof}
\appendix
\section{Univariate Brushlet bases}\label{app:brush}
We first introduce brushlets in a univariate setting. 
Each univariate brushlet basis is associated with a partition of the
frequency axis $\bR$. The partition can  be chosen with almost
no restrictions, but in order to have good properties of the
associated basis we need to impose some growth conditions on the
partition. 
We have the following definition.

\begin{mydef}
A family $\ptt$ of intervals is called a {\it disjoint covering} of $\R$ if
it consists of a countable set of pairwise disjoint half-open intervals
$I=[\alpha_I,\alpha'_I)$, $\alpha_I<\alpha'_I$, such
that $\cup_{I\in \ptt}I=\R$. If, furthermore, each interval in $\ptt$
has a unique adjacent interval in $\ptt$ to the left and to the right,
and there exists a constant $A>1$ such that
\begin{equation}\label{eq:growthcond}
A^{-1}\leq \frac{|I|}{|I'|} \leq A,\qquad \text{for
  all adjacent}\; I,I'\in \ptt,
\end{equation}
we call $\ptt$ a {\it moderate disjoint covering} of $\R$.
\end{mydef}

Given a moderate disjoint covering $\ptt$ of $\R$, assign to each interval $I\in \ptt$ a cutoff
radius $\varepsilon_I>0$ at the left endpoint and a cutoff radius
$\varepsilon'_I>0$ at the right endpoint, satisfying
\begin{equation}\label{eq:epsilon}
\begin{cases}
\text{(i)}& \ve'_I= \ve_{I'}\; \text{whenever}\;
\alpha'_I=\alpha_{I'}\\
\text{(ii)}& \ve_I+\ve'_I\leq |I|\\
\text{(iii)}& \ve_I\geq c |I|,
\end{cases}
\end{equation}
with $c>0$ independent of $I$.

We are now ready to define the brushlet system. For each $I\in \ptt$,
we 
will  construct a smooth bell function localized in a
neighborhood of this interval. Take a
non-negative ramp function $\rho \in C^\infty(\R)$ satisfying
\begin{equation}\label{eq:ramp}
\rho(\xi)=\left\{ \begin{array}{ll}
0&\mbox{for}\; \xi \leq -1,\\
1&\mbox{for}\; \xi \geq 1,
\end{array} \right.
\end{equation}
with the property that
\begin{equation}\label{eq:ramp2}
\rho (\xi)^2+\rho(-\xi)^2=1\qquad \text{for all}\; \xi \in\R.
\end{equation}
Define for each $I = [\alpha_I,\alpha'_I)\in \ptt$ the {\it bell function}
\begin{equation}\label{eq:bell}
b_I(\xi) := \rho \bigg( \frac{\xi-\alpha_I}{\ve_I}\bigg)\rho
\bigg( \frac{\alpha'_I-\xi}{\ve'_I}\bigg) .
\end{equation}
Notice that $\supp (b_I) \subset [\alpha_I-\ve_I,\alpha'_I+\ve'_I]$ and
$b_I(\xi)=1$ for $\xi \in [\alpha_I+\ve_I,\alpha'_I-\ve'_I]$.
Now the set of local cosine functions
\begin{equation}\label{eq:brush1}
\hat{w}_{n,I}(\xi) =\sqrt{\frac{2}{|I|}} b_I(\xi)
\cos\biggl( \pi \bigl( n+{\textstyle \frac12}\bigr)
\frac{\xi- \alpha_I}{|I|} \biggr) ,\quad n\in \N_0,\quad I\in \ptt,
\end{equation}
 with $\bN_0:=\bN\cup \{0\}$, constitute an orthonormal basis for $L_2(\bR)$, see e.g.\
\cite{Auscher1992}. The collection $\{ w_{n,I}\colon I\in\ptt, n\in\N_0\}$ is called a
{ brushlet system}. The brushlets also have an explicit
representation in the time domain. Define the set of {central
  bell functions} $\{ g_I\}_{I\in \ptt}$ by
\begin{equation}\label{eq:gb}
\hat{g}_I(\xi) := \rho \biggl(
\frac{|I|}{\ve_I} \xi\biggr) \rho \biggl( \frac{|I|}{\ve'_I} (1-\xi)\biggr),
\end{equation}
such that $b_I(\xi) = \hat{g}_I\bigl(|I|^{-1}(\xi -\alpha_I)\bigr)$,
 and let for notational convenience
$$e_{n,I}:= \frac{\pi \bigl( n+{\textstyle
    \frac12}\bigr)}{|I|},\qquad
I\in \ptt,\; n\in \N_0.$$
Then,
\begin{equation}
w_{n,I}(x)
= \sqrt{\frac{|I|}{2}} e^{i\alpha_Ix} \bigl\{ g_I\bigl(
  |I|(x +e_{n,I}) \bigr) + g_I\bigl( |I|(x -e_{n,I}) \bigr)
  \bigr\}.\label{eq:gw}
\end{equation}

By a straight forward calculation it can be verified, see, e.g.,
\cite{Borup2003}, that for $r\geq 1$ there exists a constant $C:=C(r)<\infty$, independent
of $I\in \ptt$, such that  
\begin{equation}\label{eq:gdecay}
|g_I(x)|\leq C(1+|x|)^{-r}.
\end{equation}  Thus a
brushlet $w_{n,I}$ essentially consists of two well localized humps at
the points $\pm e_{n,I}$.

Given a bell function $b_I$, define an
operator $\mathcal{P}_I:L_2(\bR) \rightarrow L_2(\bR)$ by
\begin{equation}
  \label{eq:proje}
  \widehat{\mathcal{P}_If}(\xi) := b_I(\xi)\bigl[ b_I(\xi)\hat{f}(\xi) +
b_I(2\alpha_I-\xi)\hat{f}(2\alpha_I-\xi)- b_I(2\alpha'_I-\xi)\hat{f}(2\alpha'_I-\xi)\bigr].
\end{equation}
It can be verified that $\mathcal{P}_I$ is an orthogonal projection, mapping
$L_2(\bR)$ onto $$\overline{\Span}\{w_{n,I}\colon n\in \N_0\}.$$  Below we will need some of the finer properties of the
operator given by \eqref{eq:proje}. Let us list properties here, and
refer the reader to \cite[Chap.\ 1]{Hernandez1996} for a more detailed
discussion of the properties of local trigonometric bases.

 Suppose $I = [\alpha_I,\alpha'_I)$
and $J=[\alpha_J,\alpha_J')$ are two adjacent compatible intervals
(i.e., $\alpha'_I=\alpha_J$ and $\epsilon_I'=\epsilon_J$). Then it holds true that
\begin{equation}
  \label{eq:summm}
  \widehat{\mathcal{P}_If}(\xi)+\widehat{\mathcal{P}_Jf}(\xi)=\hat{f}(\xi),\qquad
\xi\in[\alpha_I+\epsilon_I,\alpha_J'-\epsilon_J'],\qquad f\in L_2(\bR).
\end{equation}
One can verify \eqref{eq:summm} using the fact that
$b_I\equiv 1$ on $[\alpha_I+\epsilon_I,\alpha'_I-\epsilon_I']$, and that
$b_J\equiv 1$ on $[\alpha_J+\epsilon_J,\alpha_J'-\epsilon_J']$,
together with the fact that 
$$\text{supp}\big(b_I(\cdot)b_I(2\alpha_I-\cdot)\big)\subseteq
[\alpha_I-\epsilon_I,\alpha_I+\epsilon_I]$$
and
$$\text{supp}\big(b_I(\cdot)b_I(2\alpha_I'-\cdot)\big)\subseteq
[\alpha_I'-\epsilon_I',\alpha_I'+\epsilon_I'].
$$
 For 
$\xi\in[\alpha_I'-\epsilon_I',\alpha_J+\epsilon_J]$ we notice that 
\begin{align}
\widehat{\mathcal{P}_If}(\xi)+&\widehat{\mathcal{P}_Jf}(\xi)=
[b_I^2(\xi)+b_J^2(\xi)(\xi)]\hat{f}(\xi)\notag \\&+
\label{eq:add} b_J(\xi)b_J(2\alpha'-\xi)\hat{f}(2\alpha'-\xi)-
b_I(\xi)b_I(2\alpha'-\xi)\hat{f}(2\alpha'-\xi).  
\end{align}
One can then conclude that \eqref{eq:summm} holds true  using the
following facts (see \cite[Chap.\ 1]{Hernandez1996})
$$b_I(\xi)=b_J(2\alpha_I'-\xi),\qquad b_J(\xi)=b_I(2\alpha_J'-\xi),\qquad
\text{for }\xi\in [\alpha_I'-\epsilon_I',\alpha_J+\epsilon_J],$$
and
$$b_I^2(\xi)+b_J^2(\xi)=1,\qquad \text{for } \xi\in [\alpha_I+\epsilon_I,\alpha_J'-\epsilon_J'].$$
Moreover, 
\begin{equation}\label{eq:proj}\mathcal{P}_I+\mathcal{P}_J=\mathcal{P}_{I\cup J}\end{equation} with the $\epsilon$-values 
$\epsilon_I$ and $\epsilon_J'$ for $I\cup J$.

Now we turn to the multivariate tensor product construction.  For a rectangle $R=I_1\times I_2\times\cdots \times I_d\subset \bR^d$, with 
$I_i = [\alpha_{I_i},\alpha'_{I_i})$, we define
 $$P_R=\bigotimes_{j=1}^d \mathcal{P}_{I_i}.$$ Clearly, $P_R$
 is a projection operator $$P_R\colon L_2(\bR^d) \rightarrow
 \overline{\operatorname{span}}\bigg\{\bigotimes_{j=1}^d w_{i_j,I_j}\colon i_j\in \bN_0\bigg\}.$$
Notice that, 
\begin{equation}
  \label{eq:operatorP}
  P_R = b_R(D)\Bigl[\bigotimes_{j=1}^d
(\operatorname{Id}+R_{\alpha_{I_j}}-R_{\alpha_{I_j}'})
\Bigr]b_R(D),
\end{equation}
where 
\begin{equation}\label{eq:S}
\widehat{b_R(D)f} := b_R\hat{f},
\end{equation}
with  $b_R:=\bigotimes_{j=1}^d b_{I_j}$, and $R_af(x) := e^{i2a}f(-x)$, 
$x,a\in \bR$. The corresponding orthonormal tensor product basis of brushlets is given by
\begin{equation}\label{eq:multivariate}
w_{n,R}:=\bigotimes_{j=1}^d w_{n_j,I_j},\qquad n=(n_1,\ldots, n_d)\in \bN_0^d.    
\end{equation}

Let $\Delta=\text{diag}(|I_i|,|I_2|,\ldots,|I_d|)$. Repeated use of \eqref{eq:gw} yields
\begin{align}
|w_{n,R}(x)|&\leq  2^{-d}|R|^{1/2}\prod_{i=1}^d  \bigg(|g_{I_i}\bigl(
  |I_i|(x_i +e_{n_i,I_i}) \bigr)|+ |g_{I_i}\bigl( |I_i|(x_i -e_{n_i,I_{i}})|\bigr) 
  \bigg)\nonumber\\
  &\leq 2^{-d}|R|^{1/2}\sum_{m=1}^{2^d} |G_R(\Delta(x+U_m e_{n,R}))|,\label{eq:G}
  \end{align}
where $G_R:=g_{I_1}\otimes\cdots\otimes g_{I_d}$, $e_{n,R}:=[e_{n_1,I_1},\ldots,e_{n_d,I_d}]^T$, and
\begin{equation}\label{eq:um}
    U_m:=\text{diag}(v_m),
\end{equation}
with $v_m\in\R^d$ chosen such that $\cup_{m=1}^{2^d}v_m= \{-1,1\}^d$. We also notice that for any $r>0$, using \eqref{eq:gdecay} and \eqref{ad8},
\begin{equation}\label{eq:ngdecay}
|G_R(x)|\leq C_r\brac{x}^{-r},\qquad x\in\R^d,    
\end{equation}
with $C_r$ independent of $R$.

\section{Some technical  proofs}\label{s:app}
We first give a proof of Proposition \ref{prop:onb}.
\begin{proof}[Proof of Proposition \ref{prop:onb}]
Let us consider orthonormality. Let $\mathcal{S}_j:=\{w_{n,R_{j,k}}:k\in E,n\in\bN_0^d\}$ and notice that the functions in $\mathcal{S}_m$ and $\mathcal{S}_n$ have disjoint frequency support for $|m-n|>1$. 
Also notice, using the separable structure of the multi-variate brushlet functions,  that  the partitioning of the sets $K_j$ ensures that $\mathcal{S}_m$ is orthogonal to $\mathcal{S}_{m\pm 1}$, where it is essential that we have chosen compatible $\epsilon$-values for the univariate systems at levels $j=m$ and $j=m\pm 1$, see the specification following \eqref{eq:part}. Within each $\mathcal{S}_j$, orthonormality follows directly from the separable structure of the bi-variate brushlet system and the orthogonality of the respective univariate brushlet systems. 

We will now verify completeness of the system, where  we  first notice that for $f\in L_2(\bR^d)$, 
$$\sum_{R\in \mathcal{P}_{j,k}, j\in \bZ, k\in E} \widehat{P_R f}(\xi),$$
where we use the notation introduced in Appendix \ref{app:brush},
is well-defined and converges pointwise as the support of each term  in the sum overlaps with at most $2^d$ other terms corresponding to adjacent rectangles.  
We will exploit the tensor product structure of $P_R$, where we first sum "too many" terms in the sense that we consider the product set $\tilde{E}:=E_2^d\supsetneq E$, where $E_2:=\{\pm 2,\pm 1\}$ is defined as before.
 We have,
\begin{align*}\sum_{k\in\tilde{E}} P_{R_{j,k}}&=
\sum_{k\in\tilde{E}} P_{I^1_{j,k_1}}\otimes P_{I^2_{j,k_2}}\otimes\cdots\otimes P_{I_{j,k_d}^n}\\&=
\bigg(\sum_{k_1\in E_2} P_{I^1_{j,k_1}}\bigg)\otimes \bigg(\sum_{k_2\in E_2} P_{I^2_{j,k_2}}\bigg)\otimes\cdots\otimes \bigg(\sum_{k_d\in E_2} P_{I^n_{j,k_d}}\bigg)\\&=
P_{I_{j}^1}\otimes P_{I_{j}^2}\otimes\cdots\otimes P_{I_{j}^d},
\end{align*}
where $I_j^i:=\big[-2^{ja_i},2^{ja_i}\big)$ with cutoff radius 
 $2^{(j-2)a_i}$ at $\pm 2^{ja_i}$. In a similar fashion, we deduce that
 $$\sum_{k\in \{\pm 1\}^d} P_{R_{j,k}}=P_{I_{j-1}^1}\otimes P_{I_{j-1}^2}\otimes\cdots\otimes P_{I_{j-1}^d}.$$
 Hence,
 \begin{align}\sum_{k\in{E}} P_{R_{j,k}}&=\sum_{k\in\tilde{E}} P_{R_{j,k}}-\sum_{k\in\{\pm 1\}^d} P_{R_{j,k}}\nonumber\\
 &=P_{I_{j}^1}\otimes P_{I_{j}^2}\otimes\cdots\otimes P_{I_{j}^n}-P_{I_{j-1}^1}\otimes P_{I_{j-1}^2}\otimes\cdots\otimes P_{I_{j-1}^d}.\label{eq:tele}
 \end{align}
 Noticing the telescoping structure of \eqref{eq:tele}, we obtain for $j_0\in\bZ$ and $N\in \bN$,
 \begin{align*}\sum_{j=j_0-N}^{j_0+N}\sum_{k\in{E}} P_{R_{j,k}}=
P_{I_{j_0+N}^1}\otimes P_{I_{j_0+N}^2}\otimes&\cdots\otimes P_{I_{j_0+N}^d}\\&-P_{I_{j_0-N}^1}\otimes P_{I_{j_0-N}^2}\otimes\cdots\otimes P_{I_{j_0-N}^d}.
\end{align*}
It thus follows easily  that 
$$\lim_{N\rightarrow \infty}\sum_{j=j_0-N}^{j_0+N}\sum_{k\in{E}} P_{R_{j,k}}=Id_{L_2(\bR^d)},$$
in the strong operator topology. This completes the proof.
\end{proof}

We now give a proof of Lemma \ref{lem:maxbound}, which  was used in the proof of Proposition \ref{lem:reconstruct}.

\begin{proof}[Proof of Lemma \ref{lem:maxbound}]
  From \eqref{eq:ngdecay} we have that 
  \begin{equation}\label{eq:west}|w_{n,R}(x)|\leq
  C_N|R|^{1/2}\sum_{m=1}^{2^d}\brac{
U_m 2^{j\vec{a}}x-\pi(\cn+\ca)}^{-N},
\end{equation} 
for any $N>0$, with $C_N$ independent of $R$.
 Fix $N>d/r$. We can,
  without loss of generality, suppose $x\in
  U(R,{0})$. 
    For $j\in \bN$, we let $$A_j=\{ \cn\in \bN_0^d\colon 2^{j-1}<| \pi(\cn+\ca)|_{\va}\leq 2^j\},$$
    and let $A_0:=\{ \cn\in \bN_0^d\colon  \pi(\cn+\ca)|_{\va}\leq  1\}$.
    Notice that $\cup_{\cn\in A_j}
    U(R,\cn)$ is a bounded set contained in the ball
    ${B}_{\va}(0,c2^{j+1}|R|^{-1/2})$. Now, since $|U(R,n)|\asymp |R|^{-1}$ uniformly, and $0<r\leq 1$, and using \eqref{bracket},
    \begin{align*}
      \sum_{\cn\in A_j} &|s_{n,R}|\brac{ 
2^{j\vec{a}}x-\pi(\cn+\ca)}^{-N} \\&\leq C2^{-jN} \sum_{\cn\in
        A_j} |s_{n,R}|\\
&      \leq C2^{-jN} \Bigl( \sum_{\cn\in
        A_j} |s_{n,R}|^r\Bigr)^{1/r}\\
      &\leq C2^{-jN} |R|^{1/r} \biggl( \int \sum_{\cn\in
        A_j} |s_{n,R}|^r\mathds{1}_{U(R,\cn)}(y)\, dy \biggr)^{1/r}\\
      &\leq CL^{1-r}2^{-jN} |R|^{1/r} \biggl( 
\int_{{B}_{\va}(0,c2^{j+1}|R|^{-1/2})} \Bigl(\sum_{\cn\in
        A_j} |s_{n,R}|\mathds{1}_{U(R,\cn)}(y)\Bigr)^r \, dy\biggr)^{1/r}\\
      &\leq C'2^{-j(N-2/r)} \mathcal{M}_r \Bigl(\sum_{\cn\in
       \bN_0^2} |s_{n,R}|\mathds{1}_{U(R,\cn)} \Bigr)(x).
    \end{align*}
We now perform the summation over $j\in \bN_0$ to obtain
$$    \sum_{\cn\in \bN_0^2} |s_{n,R}|\brac{ 
2^{j\vec{a}}x-\pi(\cn+\ca)}^{-N} 
\leq  C\mathcal{M}_r \Bigl(\sum_{\cn\in
       \bN_0^2} |s_{n,R}|\mathds{1}_{U(R,\cn)} \Bigr)(x).$$
We then use the substitutions $x=R_m z$, $m=1,\ldots,2^d$, to cover all $2^d$ terms on the right-hand side of \eqref{eq:west}, where we use that $U_{m}$ and $2^{j\vec{a}}\cdot$ commute. 
\end{proof}

\appendix


\end{document}